\newcommand{\be}{\begin{equation}}
\newcommand{\ee}{\end{equation}}
\newcommand{\beq}{\begin{eqnarray}}
\newcommand{\eeq}{\end{eqnarray}}
\newtheorem{thm}{Theorem}[section]
\newtheorem{lma}{Lemma}[section]
\newtheorem{prop}{Proposition}[section]
\newtheorem{claim}{Claim}[section]
\newtheorem{defn}{Definition}[section]
\theoremstyle{remark}
\newtheorem{rem}{Remark}[section]
\numberwithin{equation}{section}
\def\tr{\operatorname{tr}}
\def\be{\begin{equation}}
\def\ee{\end{equation}}
\def\bee{\begin{equation*}}
\def\eee{\end{equation*}}
\def\lf{\left}
\def\ri{\right}
\def\K{K\"ahler }
\def\KE{K\"ahler-Einstein }
\def\KR{K\"ahler-Ricci }
\def\Ric{\text{\rm Ric}}
\def\Rm{\text{\rm Rm}}
\def\wt{\widetilde}
\def\p{\partial}
\def\heat{\lf(\frac{\p}{\p t}-\Delta\ri)}
\def\tr{\operatorname{tr}}
\def\e{\epsilon}
\def\a{{\alpha}}
\def\b{{\beta}}
\def\ijb{{i\bar{j}}}
\def\R{\mathbb{R}}
\def\C{\mathbb{C}}
\def\ddb{\sqrt{-1}\partial\bar\partial}
\begin{document}

 \title{\bf The K\"AHLER Ricci flow around complete bounded curvature K\"AHLER metrics}\vskip .2cm
\author{Albert Chau$^1$}
\address{Department of Mathematics,
The University of British Columbia, Room 121, 1984 Mathematics
Road, Vancouver, B.C., Canada V6T 1Z2} \email{chau@math.ubc.ca}

\author{Man-Chun Lee}

\address{Department of Mathematics,
The University of British Columbia, Room 121, 1984 Mathematics
Road, Vancouver, B.C., Canada V6T 1Z2} \email{mclee@math.ubc.ca}

\thanks{$^1$Research
partially supported by NSERC grant no. \#327637-06}


\begin{abstract} We produce complete bounded curvature solutions to K\"ahler-Ricci flow with existence time estimates,  assuming only that the initial data is a smooth \K metric uniformly equivalent to another complete bounded curvature \K metric.  We obtain related flow results for non-smooth as well as degenerate initial conditions.  We also obtain a stability result for complex space forms under the flow.

\noindent{\it Keywords}:  K\"ahler Ricci flow, complete non-compact \K manifolds.
\end{abstract}

\maketitle\markboth{Albert Chau and Man-Chun Lee} {The \K Ricci flow around a complete bounded curvature \K metric}

\section{Introduction}

 Let $M^n$ be a non-compact complex manifold.  The \K Ricci flow on $M^n$ starting from an initial \K metric $g_0$ is the evolution equation 
 \be\label{krf}
 \left\{
   \begin{array}{ll}
     \displaystyle\frac{\partial g_{\ijb}}{\partial t} =-R_{\ijb}\\
          g(0)  = g_0.
   \end{array}
 \right.
 \ee
By a solution to \eqref{krf} we mean a smooth family of \K metrics $g(t)$ satisfying  \eqref{krf} on $M\times[0, T)$ for some $T>0$.   A classical theorem of W.X. Shi \cite{Shi1997} says that if $g_0$ is complete with bounded curvature then \eqref{krf} has a solution $g(t)$ which has bounded curvature and is equivalent to $g_0$  for $t>0$.   In this paper we show that the same result holds assuming only that $g_0$ is equivalent to a complete \K metric with bounded curvature.  In particular, $g_0$ may have unbounded curvature.

 There have been many other works on the existence of solutions to \K Ricci flow and real (Riemannian) Ricci flow when the initial metric $g_0$ has possibly unbounded curvature (see for example \cite{CabezasWilking2015} \cite{CabezasBamlerWilking2017}, \cite{GiesenTopping2013}, \cite{CLT1},  \cite{CLT2},  \cite{He2016}, \cite{LeeTam2017}, \cite{Simon2002}, \cite{Hochard2016} and references therein).  In particular, Simon \cite{Simon2002} proved the real Ricci flow has a complete solution starting from any Riemannian metric $g_0$ inside an $\e$ neighborhood of a complete bounded curvature Riemannian metric $h$ in the $C^0$ topology induced by $h$, provided $\e$ is sufficiently small depending on $n$.   A solution to \K Ricci flow is a special solution to the real Ricci flow, thus our result shows the smallness condition on $\e$ can be removed in Simon's theorem when $g_0$ and $h$ are smooth \K.

 Fix a complete \K metric $h$ on $M$ with curvature bound  $|Rm(h)|\leq K$.  For $c_2>c_1>0$, define the following space of \K metrics 
\begin{equation}\label{S}
S(c_1, c_2,h):=\{ g_0: g_0 \text{ is smooth and \K on $M$ and } c_1h \leq g_0 \leq c_2 h  \} 
\end{equation}  
and for each $g_0\in S(c_1, c_2, h)$, define
\begin{equation}\label{defnexistencetime}
\begin{split}
T_{g_0}=\sup \{T:& \text{\eqref{krf} has a solution $g(t)$ on $M\times[0, T)$ which has}\\
&\text{bounded curvature and is equivalent to $g_0$ for all $t>0$}\}
\end{split}
\end{equation}
We will refer to a solution as in the definition of $T_{g_0}$ as a bounded curvature solution equivalent to $g_0$.  Then our main existence result for \eqref{krf} can be stated as follows.

\begin{thm}\label{mainthm}
If $g_0\in S(1, c, h)$ then \eqref{krf} has a bounded curvature solution $g(t)$ on $M\times [0, T_h)$ which is equivalent to $g_0$.   Moreover, there exists positive constants $a(n, c, K), T(n, c, K),C_2(n, c, K),  C_1(n)>0$ such that for all $t\in [0, T)$ we have
\begin{enumerate}
\item[(1)] $g(t)\in S((e^{-C_1 Kt}), C_2, h)$ 
\item[(2)]$
\sup_M \| Rm(t)\|_{g(t)} \leq a/t
$
\end{enumerate}
\end{thm}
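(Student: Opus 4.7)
My plan is to approximate $g_0$ by complete bounded curvature \K metrics, flow each approximation via Shi's theorem, and then pass to a limit using uniform estimates. First I would construct a sequence of complete bounded curvature \K metrics $\{g_{0,k}\}$ satisfying $(1-\tfrac{1}{k})h \leq g_{0,k} \leq (c+\tfrac{1}{k})h$ and $g_{0,k}\to g_0$ in $C^\infty_{loc}$; this can be arranged by modifying $g_0$ to match $h$ outside an exhaustion $\Omega_k\Subset M$ with care to preserve the \K condition, for instance by locally interpolating K\"ahler potentials and gluing to $h$ through a cutoff region. Shi's theorem then produces bounded curvature \KR solutions $g_k(t)$ on $M\times[0,T_k)$ starting from $g_{0,k}$, whose curvature norm must blow up as $t\to T_k^-$ if $T_k<\infty$.

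The core of the proof is to establish both estimates (1) and (2) with constants \emph{independent} of $k$, on a uniform interval $[0,T(n,c,K))$. For the metric equivalence in (1), I would invoke Chern--Lu type inequalities for $u_k:=\log \tr_h g_k$ and $v_k:=\log\tr_{g_k}h$: under \KR flow with $h$ of bisectional curvature bounded by $K$, these satisfy heat inequalities of the form $\heat u_k\le C(n)K\,\tr_{g_k} h$ and $\heat v_k\ge -C(n)K\,\tr_{g_k}h$. Applying a parabolic maximum principle on compact exhaustions (legitimate because each $g_k$ is complete with bounded curvature, so Omori--Yau type barrier arguments apply), and using the initial bounds to control the cross term $\tr_{g_k}h$, gives the upper bound $g_k(t)\le C_2 h$, while the analogous argument for $v_k$ yields the exponentially decaying lower bound $g_k(t)\ge e^{-C_1 K t}h$, with the factor $e^{-C_1Kt}$ emerging from a pointwise Gronwall inequality $\dot\lambda\ge -C_1 K\lambda$.

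The main obstacle, as I see it, is deriving the $a/t$ curvature bound (2) uniformly in $k$. My strategy is a Shi--Bando--Hamilton style maximum principle argument on the quantity
$$\Phi_k \;=\; \frac{t\,|\Rm(g_k)|^2}{(B - u_k)^2},$$
with $B$ chosen so that $B - u_k\ge 1$ on $M\times[0,T)$ using the upper bound from (1). Combining Hamilton's evolution inequality $\heat |\Rm|^2\le -2|\nabla \Rm|^2 + C_n|\Rm|^3$ for \KR flow with the metric equivalence and the good $-|\nabla \Rm|^2$ term to absorb the bad cross terms produced by $\heat(B-u_k)^{-2}$, a maximum-point computation should then force $\Phi_k\le a(n,c,K)$, giving the claimed bound. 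The delicate point is controlling the $|\Rm|^3$ term by the gradient square via a Kato/Cauchy--Schwarz inequality after multiplying by $(B-u_k)^{-2}$; this is the place where the metric equivalence from Step~2 is used most essentially.

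Finally, once the uniform bounds (1) and (2) are in place, Shi's local derivative estimates yield uniform $C^m_{loc}$ bounds on $g_k(t)$ for each $t>0$, so Hamilton's compactness theorem (or Arzel\`a--Ascoli in local coordinates) extracts a subsequential smooth limit $g(t)$ on $M\times(0,T)$ solving \eqref{krf} with estimates (1) and (2) inherited from the $g_k$. The uniform metric equivalence together with $g_{0,k}\to g_0$ in $C^\infty_{loc}$ forces $g(t)\to g_0$ uniformly on compacts as $t\downarrow 0$, verifying the initial condition and completeness of $g(t)$; iterating the construction, starting from $g(T')$ for $T'<T$ close to $T$, then extends the flow to the full interval $[0,T_h)$.
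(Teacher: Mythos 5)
Your Step 1 is where the proposal breaks down, and it is not a technical gap but the main difficulty of the theorem assumed away. You want complete bounded-curvature \K metrics $g_{0,k}$ with $(1-\tfrac1k)h\leq g_{0,k}\leq (c+\tfrac1k)h$, equal to $g_0$ on $\Omega_k$ and glued to $h$ outside, ``by locally interpolating K\"ahler potentials.'' In general $\omega_0-\omega_h$ is not globally $\ddb$-exact, and even where one can write $\omega_0=\omega_h+\ddb u$ locally, replacing $u$ by $\chi u$ produces the terms $\partial\chi\wedge\bar\partial u+\partial u\wedge\bar\partial\chi+u\,\ddb\chi$ in the transition annulus; since the only hypothesis on $g_0$ is the $C^0$ bound $h\leq g_0\leq ch$ (its curvature and $\nabla^h g_0$ are uncontrolled), there is no way to guarantee positivity, let alone the uniform two-sided pinching by $h$, in the gluing region. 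If such an approximation were available, the theorem would indeed reduce to Shi's theorem plus standard global maximum-principle estimates; this is precisely why the paper never modifies $g_0$ in the interior. Instead it works on exhausting domains $U_R$, makes the metric complete near $\partial U_R$ by a conformal factor $e^{2F}$ (which destroys K\"ahlerity in dimension $n\geq 2$, forcing the use of the Chern--Ricci flow existence theorem of Lee--Tam), and runs the iteration $P(k)$ of Lemma \ref{mainclaim}: restart the flow at time $t_k$ from the bounded-geometry metric $g(t_k)$ on a slightly smaller domain, doubling the existence time at each stage, with the local estimates of Theorem \ref{local-Lip} and Lemma \ref{curv-esti} (which are conditional on the a priori bounds $|\Rm|\leq a/t$, $|\varphi|\leq at$ carried through the induction) furnishing the uniform control needed to pass $R\to\infty$.

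There is a second, independent gap in your derivation of the $a/t$ bound. The quantity $\Phi_k=t|\Rm|^2/(B-u_k)^2$ with $u_k=\log\tr_h g_k$ cannot absorb the cubic term: at an interior maximum the bad term is of size $t|\Rm|^3$, the term $-|\nabla\Rm|^2$ is useless pointwise (there is no Kato/Cauchy--Schwarz inequality bounding $|\Rm|^3$ by $|\nabla\Rm|^2$), and the denominator $B-u_k$ is a zeroth-order quantity whose evolution produces no good gradient term of the right strength. The working scheme (Sherman--Weinkove, reproduced as Lemma \ref{curv-esti}) is two-step: first bound the first-order quantity $t|\Psi|^2$ (where $\Psi=\Gamma_{g}-\Gamma_h$) by coupling it with $L\tr_g h$, whose evolution supplies the good term $-\Lambda^{-1}|\Psi|^2$; then estimate $t^2|\Rm|^2/(A-\tilde F)$ with $\tilde F=t|\Psi|^2+L\tr_g h$, so that the evolution of the denominator contributes $-t|\nabla\Psi|^2-t|\bar\nabla\Psi|^2\leq -c\,t|\Rm|^2+Ct$, producing the $-t^3|\Rm|^4$ term that dominates $t^2|\Rm|^3$. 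If you replace your Step 3 by this argument (or simply quote it), the curvature estimate follows from the uniform equivalence; but that still leaves the approximation in Step 1 unjustified, so the proof as a whole does not go through.
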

\begin{rem}
The existence part also holds when the reference metric $h$ is only Hermtian (not K\"ahler) and with bounded torsion and Chern curvature in addition. We leave the verification of this to interested readers.
\end{rem}

 When $g_0$ is Hermitan but not \K on $M$ then by a smooth solution to \eqref{krf} on $M\times (0, T)$ we mean a smooth family of \K metrics $g(t)$ on $M\times (0, T)$ solving the first equation in \eqref{krf} such that  $g(t) \to g_0$ pointwise on $M$ as $t\to 0$.   In this sense, we may use Theorem \ref{mainthm} to solve \eqref{krf} assuming $g_0$ is a $C^2_{loc}(M)$ limit of smooth \K metrics $g_k\in S(c, c_k, h)$ for some $c, c_k>0$ where $c_k$ is not assumed uniformly bounded above.  On the other hand, if $g_0$ is a $C^0_{loc}(M)$ limit of smooth \K metrics $g_k\in S(c, \tilde{c}, h)$ for some $c, \tilde{c}>0$ we may have a solution to \eqref{krf} satisfying the same conclusions as in Theorem \ref{mainthm}.

\begin{thm}\label{non-smooth}  
We have the following

\begin{enumerate}
\item 
If $g_0\in Cl^2_{loc}  \left( \bigcup_{c>0} S(1, c, h)\right)$, then there is $T(n,K), C_1(n) >0$ such that \eqref{krf} has a solution on $M\times (0,T)$ in the sense above with 
$g(t)\geq e^{-C_1Kt}h$ on $(0,T]$. 
\item
 If $g_0\in Cl^0_{loc} S(1,c,h)$ for some $c>1$, then \eqref{krf} has a bounded curvature solution $g(t)$ on $M\times(0, T_h)$ satisfying the same conclusions as in  Theorem \ref{mainthm}.
\end{enumerate}
\end{thm}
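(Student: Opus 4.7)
The plan for both parts is to approximate $g_0$ by smooth \K metrics satisfying the hypotheses of Theorem \ref{mainthm}, apply that theorem to each, and pass to a subsequential limit of the associated flows.

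For part (2), choose smooth $g_k\in S(1,c,h)$ with $g_k\to g_0$ in $C^0_{loc}$. Theorem \ref{mainthm} produces bounded curvature flows $g_k(t)$ on $M\times[0,T_h)$ satisfying the uniform-in-$k$ estimates $g_k(t)\in S(e^{-C_1Kt},C_2,h)$ and $\sup_M \|Rm(g_k(t))\|_{g_k(t)}\leq a/t$, since $C_2$ and $a$ depend only on $n,c,K$. Shi's local higher derivative estimates then give uniform $C^\infty_{loc}$ bounds on compact subsets of $M\times(0,T_h)$, and a diagonal Arzel\`a--Ascoli extraction yields a smooth subsequential limit $g(t)$ solving \eqref{krf} on $M\times(0,T_h)$ inheriting the same estimates. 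For the initial condition, the parabolic evolution of trace quantities such as $\tr_h g_k(t)$, combined with the uniform two-sided equivalence with $h$ and the maximum principle, produces a uniform-in-$k$ modulus of continuity at $t=0$; together with $C^0_{loc}$ convergence $g_k\to g_0$ this gives $g(t)\to g_0$ pointwise.

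For part (1), approximate $g_0$ by smooth $g_k\in S(1,c_k,h)$ in $C^2_{loc}$, where now $c_k$ may be unbounded in $k$. Theorem \ref{mainthm} still produces flows $g_k(t)$ on $[0,T_h)$, and because $C_1=C_1(n)$, the lower bound $g_k(t)\geq e^{-C_1Kt}h$ is uniform in $k$. However the upper bound $C_2(n,c_k,K)$ and the curvature bound $a(n,c_k,K)$ from Theorem \ref{mainthm} degenerate as $c_k\to\infty$, so uniform control must be obtained locally. On any precompact $\Omega\subset M$, the $C^2_{loc}$ convergence gives uniform initial bounds on $g_k$ and on $\|Rm(g_k)\|$ over $\Omega$, which one propagates a short time forward using a Perelman-type pseudolocality argument or a maximum principle applied to $\tr_h g_k(t)$ against a spatial cutoff. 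Exhausting $M$ by such $\Omega$ and diagonalizing yields a limit $g(t)$ on $M\times(0,T]$ with $T=T(n,K)$ satisfying $g(t)\geq e^{-C_1Kt}h$, and the $C^2_{loc}$ initial control forces $g(t)\to g_0$ pointwise as $t\to 0^+$.

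The main obstacle is part (1): without a uniform upper bound on the approximating initial metrics, the global estimates of Theorem \ref{mainthm} cannot be invoked directly, so one must develop short-time local estimates that propagate forward from the $C^2_{loc}$-controlled initial layer. The delicate point is to ensure that the resulting local time of existence does not degenerate as $\Omega$ exhausts $M$, so that a uniform existence time $T=T(n,K)$ is recovered; the pseudolocality or cutoff maximum principle argument providing this is where the essential work lies.
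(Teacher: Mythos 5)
Your overall strategy (approximate, apply Theorem \ref{mainthm}, pass to a limit) is the same as the paper's, but in part (1) two steps fail as written. First, the estimate $g_k(t)\in S(e^{-C_1Kt},C_2,h)$ in Theorem \ref{mainthm} is only guaranteed for $t<T(n,c_k,K)$, and this time may degenerate as $c_k\to\infty$; so the lower bound $g_k(t)\geq e^{-C_1Kt}h$ is \emph{not} uniform in $k$ on any fixed time interval, contrary to what you assert. The paper recovers a uniform interval by a global argument exploiting $g_k\geq h$: by the parabolic Schwarz lemma in \eqref{genereal-equ}, $\tr_{g_k(t)}h$ satisfies $\left(\frac{\partial}{\partial t}-\Delta\right)\tr_{g}h\leq C(K)(\tr_{g}h)^2$ and is at most $n$ at $t=0$, so the maximum principle (valid since each $g_k(t)$ is a bounded curvature solution equivalent to $h$) gives $g_k(t)\geq\frac{1}{2n}h$ on $[0,\e_n K^{-1}]$ with $\e_n$ independent of $c_k$, and Proposition \ref{prop1} then upgrades this to the exponential bound with $C_1=C_1(n)$. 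Second, your mechanism for the local upper bound is not viable: $\tr_h g_k(t)$ satisfies no useful parabolic inequality, and a pseudolocality argument does not produce two-sided comparisons with $h$ uniform in $k$ (the available local estimates, e.g.\ Lemma \ref{curv-esti} or Lemma \ref{l-curv1}, presuppose exactly such two-sided bounds, so the argument would be circular). The paper's route (the auxiliary Theorem \ref{non-smoothnew}) uses the extra information carried by $C^2_{loc}$ convergence in a different way: the scalar curvatures of the approximators are locally uniformly bounded below, which controls $\frac{\partial}{\partial t}\log\det g_k=-R$ and hence the volume ratio from above locally (\cite[Lemma 3.3]{CLT1}); combined with the Schwarz lower bound and the elementary inequality $\tr_h g\leq \frac{\det g}{\det h}(\tr_g h)^{n-1}$ this yields the local uniform upper bound needed for Evans--Krylov and the diagonal limit.

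There are also gaps at $t=0$ and at the endpoint in part (2). The uniform ``modulus of continuity at $t=0$'' you invoke is precisely the content of Lemma \ref{lemgequivalenttoboundedcurvaturelocal} (from \cite{CLT1}): one compares $g_k(t)$ on a coordinate ball with a fixed approximator $g_{k_0}$ that is $\delta$-close to $g_0$ there, under locally uniform two-sided bounds; this is a genuine local stability estimate, not a direct consequence of a maximum principle for trace quantities, and in part (1) the sentence ``the $C^2_{loc}$ initial control forces $g(t)\to g_0$ pointwise'' has no mechanism behind it. Finally, in part (2) the uniform-in-$k$ equivalence $C^{-1}h\leq g_k(t)\leq Ch$ from Theorem \ref{mainthm} holds only for $t<T(n,c,K)$, not up to $T_h$, so you cannot claim uniform $C^\infty_{loc}$ bounds on compact subsets of $M\times(0,T_h)$ and extract the limit there directly; the paper passes to the limit on $(0,T(n,c,K))$ and then extends the limit solution to $(0,T_h)$ as a bounded curvature solution using \cite[Theorem 2.2]{CLT2}, exactly as in the proof of Theorem \ref{mainthm}.
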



As another application, we have the following stability result for \eqref{krf} around complex spaceforms. In the Riemannian case, the stability of spaceforms was first studied in \cite{SSS1,SSS2}.  We show that in the \K category, we do not require the $\e$-fairness required in those works.
\begin{thm}\label{Stability} Suppose $(M,h)$ is a complete noncompact \K manifold with constant holomorphic sectional curvature $H_h=2k\leq 0$. Then if $g_0\in S(c_1,c_2,h)$ for some $c_2>c_1>0$, then  $T_{g_0}=+\infty$. Moreover, 
\begin{enumerate}
\item If $k<0$, then $t^{-1}g(t)$ converges to $h$ in $C^\infty_{loc}$ as $t\rightarrow \infty$;
\item If $k=0$, then $g(t)$ converges sub-sequentially in $C^\infty_{loc}$ to a complete flat metric as $t\rightarrow \infty$.
\end{enumerate}
\end{thm}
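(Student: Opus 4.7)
The plan is to exploit the special geometry of $(M,h)$ --- it is K\"ahler--Einstein with $\Ric(h)=(n+1)k\,h$ and, since $k\le 0$, has nonpositive holomorphic bisectional curvature --- to propagate two-sided comparisons between $g(t)$ and $h$ indefinitely.  Rescaling $g_0$ and $h$ simultaneously by $c_1^{-1}$ (which preserves constant holomorphic sectional curvature and only reparametrizes time), I may assume $g_0\in S(1,c,h)$.  Theorem \ref{mainthm} then produces a bounded curvature solution $g(t)$ equivalent to $g_0$ on $[0,T_h)$ satisfying $g(t)\ge e^{-C_1Kt}h$.

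The crucial step is a complementary upper bound via the parabolic Schwarz lemma: applying the Chern--Lu inequality to the identity map $(M,g(t))\to (M,h)$ and using the nonpositivity of the bisectional curvature of $h$ gives
\[
\heat \log \tr_h g \;\le\; (n+1)k \;\le\; 0,
\]
so that a cutoff maximum principle --- with cutoffs built from the globally bounded-geometry metric $h$ --- forces $\tr_h g(t)\le nc$ on $[0,T_h)$.  An analogous estimate on $\tr_{g(t)} h$ improves the lower bound from Theorem \ref{mainthm}(1) to one independent of $t$.  Together these yield $g(t)\in S(c_1',c_2',h)$ on $[0,T_h)$ with $c_1',c_2'$ depending only on $n,c,K$.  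Restarting Theorem \ref{mainthm} at any $t_0\in(0,T_h)$, where $g(t_0)$ has curvature bounded by $a/t_0$, produces a further existence interval of length $T(n,c',K)$ that is \emph{independent} of the restart time; iterating gives $T_{g_0}=+\infty$.

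The convergence statements follow from refinements of the same estimates.  For $k<0$, the strict negativity of $(n+1)k$ drives both $\tr_h g(t)$ and $\tr_{g(t)}h$, suitably normalized by $t^{-1}$, toward their equilibrium values against $h$, forcing $t^{-1}g(t)\to h$ pointwise.  The curvature decay $|\Rm(g(t))|\le a/t$ from Theorem \ref{mainthm}(2) and Shi's local derivative estimates then upgrade this to $C^\infty_{loc}$.  For $k=0$ the decay $|\Rm(g(t))|\le a/t$ itself says the geometry is flattening, so any subsequential limit must be flat; two-sided $h$-equivalence of $g(t)$ combined with Shi's estimates provides the $C^\infty_{loc}$ subsequential precompactness of $g(t)$ itself.

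The main difficulty lies in applying the maximum principle for the Schwarz-lemma inequalities on the noncompact manifold $M$, where $g(t)$ has bounded curvature only for $t>0$ and may degenerate badly as $t\to 0^+$.  This is circumvented by using $h$ --- whose curvature and injectivity radius are uniformly controlled --- to build the cutoff functions, in the spirit of the reference-metric framework already underlying Theorem \ref{mainthm}.
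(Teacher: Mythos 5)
Your key estimate is not correct as stated, and the error propagates through the rest of the argument. The Chern--Lu/parabolic Schwarz lemma for the identity map $(M,g(t))\to(M,h)$ controls $\tr_{g(t)}h$, the trace of the \emph{fixed} metric with respect to the \emph{evolving} one: when $h$ has nonpositive bisectional curvature it gives $\heat \log\tr_{g}h\le 0$, hence a time-independent lower bound $g(t)\ge c\,h$. It does not give $\heat\log\tr_h g\le (n+1)k$; the evolution of $\tr_h g$ involves $-\tr_h\Ric(g(t))$, which cannot be controlled by the curvature of $h$ alone. An upper bound for $g(t)$ must instead come from the volume form, via $\heat\log\frac{\det g}{\det h}=\tr_g\Ric(h)\le 0$ together with $\tr_h g\le\frac{\det g}{\det h}(\tr_g h)^{n-1}$ --- which is exactly what the paper does, and only in the case $k=0$ (working on the universal cover $\C^n$). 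More seriously, the uniform-in-time two-sided bound $g(t)\in S(c_1',c_2',h)$ on which you build is \emph{false} when $k<0$: the flow starting from $h$ itself is $g(t)=(1+(n+1)|k|t)\,h$, which grows linearly, and indeed the conclusion $t^{-1}g(t)\to h$ is incompatible with any time-independent upper bound. Consequently your treatment of case (1) --- that strict negativity of $(n+1)k$ ``drives the traces to equilibrium'' --- is an assertion, not a proof; the normalized convergence for $k<0$ is the substantive point, and the paper obtains it by invoking the longtime existence/convergence theorem of Huang--Lee--Tam--Tong for negative holomorphic sectional curvature together with uniqueness of the complete K\"ahler--Einstein metric.

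Two further gaps. For $T_{g_0}=+\infty$ your restarting scheme relies on the (unavailable for $k<0$) uniform equivalence constants; the correct and simpler route is that $h$ is K\"ahler--Einstein with nonpositive Einstein constant, so the flow from $h$ is an explicit rescaling and $T_h=+\infty$, whence Theorem \ref{mainthm} (with the extension to $[0,T_h)$ built into its proof, via \cite{CLT2} and \cite{LeeTam2017}) already yields a longtime bounded-curvature solution. For case (2), the decay $|\Rm(g(t))|\le a/t$ in Theorem \ref{mainthm}(2) is only asserted on an initial interval $[0,T(n,c,K))$; you use it as $t\to\infty$, which is unjustified, since the extension to later times gives only bounded curvature on compact time intervals. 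The paper instead proves flatness of the subsequential limit by the quantity $F=t|\Psi|^2_{g(t)}+L\tr_g h$, which satisfies $\heat F\le 0$, is bounded by the maximum principle, and yields $|\partial g(t)|\le C t^{-1/2}$, so that $\partial g_\infty=0$ and $\omega_\infty=\ddb f$ with $f$ quadratic, hence flat. Your curvature-decay route could in principle be salvaged by re-proving $|\Rm(g(t))|\le C/t$ for all $t$ through parabolic rescaling and the local Sherman--Weinkove estimates, but that argument is not in your proposal.
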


 On the other hand, if $g_0$ is assumed only degenerate K\"ahler, in other words the corresponding (1,1) form $\omega_0$ is  closed and nonnegative but not necessarily positive, then under certain condtitions we may also produce a solution to \eqref{krf} with exsitence time estimates in the sense of the following Theorem

\begin{thm}\label{instantaneous}
Suppose $(M,\omega_h)$ is a complete noncompact \K manifold with bounded curvature. If $\omega_0$ is a closed nonnegative $(1,1)$ form such that 
\begin{enumerate}
\item $\omega_h\geq \omega_0$ on $M$;
\item $\omega_0-s\Ric(\omega_h) +s\ddb f> \b\omega_h$ for some $\b,s>0$ and $f\in C^\infty(M)\cap L^\infty(M)$.
\end{enumerate}
Then there is a smooth solution to 
\begin{align}\label{potential-flow-equ}
\dot\varphi=\log \frac{(\omega_0-t\Ric(\omega_h)+\ddb \varphi)^n}{\omega_h^n}
\end{align}
on $M\times (0,s)$ with $\varphi\rightarrow 0$ in $L^\infty(M)$ and $\omega(t)=\omega_0-t\Ric(\omega_h)+\ddb \varphi$ is a solution to the \KR flow \eqref{krf} on $M\times (0,s)$ which is  uniformly equivalent to $\omega_h$ on $M$. Moreover $\varphi$ is smooth up to $t=0$ on $U=\{ x: \omega_0>0\}$.
\end{thm}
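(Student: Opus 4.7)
The plan is to approximate $\omega_0$ by the smooth K\"ahler metrics $\omega_{0,\epsilon}:=\omega_0+\epsilon\omega_h$ ($\epsilon\in(0,1)$), solve \eqref{potential-flow-equ} with $\omega_{0,\epsilon}$ in place of $\omega_0$ via Theorem \ref{mainthm}, and pass $\epsilon\to 0^+$ using $\epsilon$-uniform a priori estimates. By hypothesis (1), $\epsilon\omega_h\leq\omega_{0,\epsilon}\leq(1+\epsilon)\omega_h$, so $\omega_{0,\epsilon}\in S(\epsilon,1+\epsilon,h)$ and Theorem \ref{mainthm} (applied with the rescaled reference $\epsilon h$) produces a complete bounded-curvature K\"ahler-Ricci flow $\omega_\epsilon(t)$ with $\omega_\epsilon(0)=\omega_{0,\epsilon}$ on some interval $[0,T_\epsilon)$. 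Setting $\varphi_\epsilon(t,x):=\int_0^t \log(\omega_\epsilon(\tau)^n/\omega_h^n)\,d\tau$ realizes $\omega_\epsilon(t)=\omega_{0,\epsilon}-t\,\Ric(\omega_h)+\ddb\varphi_\epsilon$, and $\varphi_\epsilon$ solves the $\epsilon$-analogue of \eqref{potential-flow-equ} with $\varphi_\epsilon(0)=0$.

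Three $\epsilon$-uniform estimates are needed on $M\times[0,s)$: an upper bound $\omega_\epsilon(t)\leq C\omega_h$, an $L^\infty$ bound $\|\varphi_\epsilon\|_{L^\infty}\leq C$, and a lower bound $\omega_\epsilon(t)\geq c(t)\omega_h$ with $c(t)>0$ on compact sub-intervals of $(0,s)$. The upper bound follows from a parabolic Schwarz/Chern-Lu estimate for $\tr_{\omega_\epsilon(t)}\omega_h$, using $\omega_{0,\epsilon}\leq 2\omega_h$ and the curvature bound on $\omega_h$. For the $L^\infty$ bound on the potential, introduce the twist $\tilde\varphi_\epsilon:=\varphi_\epsilon-tf$, giving
\[
\omega_\epsilon(t)=\theta_t^\epsilon+\ddb\tilde\varphi_\epsilon,\qquad \theta_t^\epsilon:=\omega_{0,\epsilon}-t\,\Ric(\omega_h)+t\,\ddb f.
\]
Writing $\theta_t^\epsilon=(1-t/s)\omega_{0,\epsilon}+(t/s)(\omega_{0,\epsilon}-s\Ric(\omega_h)+s\ddb f)$ and invoking hypothesis (2) yields $\theta_t^\epsilon\geq(t\beta/s)\omega_h$ on $[0,s]$, uniformly in $\epsilon$. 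The twisted equation $\dot{\tilde\varphi}_\epsilon=\log((\theta_t^\epsilon+\ddb\tilde\varphi_\epsilon)^n/\omega_h^n)-f$ thus has a positive reference form, and the Omori-Yau maximum principle on the complete bounded-curvature metric $\omega_\epsilon(t)$, applied to $\tilde\varphi_\epsilon\pm At$ for $A$ chosen in terms of $\|f\|_\infty$ and the upper bound, yields the $L^\infty$ control on $\tilde\varphi_\epsilon$ and hence on $\varphi_\epsilon$. The lower bound on $\omega_\epsilon(t)$ then follows from the Monge-Amp\`ere identity $\omega_\epsilon(t)^n=e^{\dot\varphi_\epsilon}\omega_h^n$ combined with the upper bound on $\omega_\epsilon/\omega_h$ and two-sided $L^\infty$ control on $\dot\varphi_\epsilon$, obtained by applying the maximum principle to the heat-type equation $(\partial_t-\Delta_{\omega_\epsilon})\dot\varphi_\epsilon=-\tr_{\omega_\epsilon}\Ric(\omega_h)$.

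With these $\epsilon$-uniform estimates in hand, the flow is continued past $T_\epsilon$ by repeatedly restarting Theorem \ref{mainthm} from the slices $\omega_\epsilon(t_0)$ for $t_0>0$; the $\epsilon$-independent lower bound $\omega_\epsilon(t_0)\geq c(t_0)\omega_h$ provides a fixed reference metric for restart, so the restart intervals do not shrink as $\epsilon\to 0$, and iteration yields existence on $[0,s)$. Interior parabolic Schauder estimates for the complex Monge-Amp\`ere operator, with ellipticity governed by the two-sided bound on $\omega_\epsilon(t)$, give $C^k_{loc}$ estimates on $M\times(0,s)$ independent of $\epsilon$. Diagonal Arzel\`a-Ascoli extraction then produces $\varphi=\lim\varphi_\epsilon$ solving \eqref{potential-flow-equ} on $M\times(0,s)$ with $\omega(t)=\omega_0-t\,\Ric(\omega_h)+\ddb\varphi$ uniformly equivalent to $\omega_h$ at each $t\in(0,s)$; the $L^\infty$ convergence $\varphi\to 0$ as $t\to 0^+$ follows from the uniform bounds and the equation. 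On the open set $U=\{\omega_0>0\}$ the initial data is already a smooth K\"ahler metric on relatively compact subsets, so standard interior parabolic regularity upgrades $\varphi$ to be smooth up to $t=0$ on $U$.

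The principal obstacle is the $L^\infty$ estimate on $\varphi_\epsilon$: hypothesis (2) is precisely the sharp positivity condition making $\theta_t^\epsilon$ positive on $[0,s]$, but the maximum-principle argument must be carried out on the non-compact evolving manifolds $(M,\omega_\epsilon(t))$ with constants independent of $\epsilon$; this requires coupling the Omori-Yau principle with a linear-in-$t$ barrier and the upper trace bound, and ensuring the restart procedure does not degrade the constants as $\epsilon\to 0$.
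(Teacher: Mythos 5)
Your overall scheme (approximating by $\omega_0+\e\omega_h$, running Theorem \ref{mainthm} on the approximants, proving $\e$-uniform bounds by maximum principles that exploit hypothesis (2) through the convex-combination positivity $\theta^\e_t\geq (t\b/s)\omega_h$, then Evans--Krylov and a diagonal limit) is the same as the paper's, but the chain of a priori estimates you propose has genuine gaps. First, the direction of your trace estimate is backwards: a parabolic Schwarz/Chern--Lu bound on $\tr_{\omega_\e(t)}\omega_h$ yields $\omega_h\leq C\,\omega_\e(t)$, i.e.\ the \emph{lower} bound on the evolving metric, not the claimed upper bound $\omega_\e(t)\leq C\omega_h$; an upper bound would require controlling $\tr_{\omega_h}\omega_\e(t)$, which is not what Chern--Lu with the curvature bound of $\omega_h$ provides. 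Second, the claimed two-sided, $\e$-uniform $L^\infty$ control of $\dot\varphi_\e$ cannot hold near $t=0$: at $t=0$ one has $\dot\varphi_\e=\log\frac{(\omega_0+\e\omega_h)^n}{\omega_h^n}\to-\infty$ as $\e\to0$ wherever $\omega_0$ degenerates, so any uniform lower bound must degenerate like $n\log t$; moreover the naive maximum principle applied to $(\partial_t-\Delta)\dot\varphi_\e=-\tr_{\omega_\e}\Ric(\omega_h)$ both propagates this non-uniform initial datum and requires $\tr_{\omega_\e}\Ric(\omega_h)$ to be controlled, i.e.\ exactly the metric lower bound you intend to deduce from it --- the argument is circular. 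Since your metric lower bound rests on these two ingredients, the core second-order estimate is missing.

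What is absent is the place where hypothesis (2) must enter the trace estimate. The paper's Claim \ref{approx-com2} uses the Tsuji-type quantity $v=(s-t)\dot\varphi_\e+\varphi_\e-sf+nt$, which satisfies $\heat v=\tr_{g_\e}\lf(\omega_0+\e\omega_h-s\Ric(h)+s\ddb f\ri)\geq \b\,\tr_{g_\e}h$, and applies the maximum principle to $F=\log\tr_{g_\e}h-Lv+(ns+1)\log t$ (plus an exhaustion term): the $-Lv$ term absorbs the bad $C\,\tr_{g_\e}h$ term from the Schwarz computation, and the $(ns+1)\log t$ weight handles the degenerate initial data, giving $\tr_{g_\e}h\leq C(t)$ uniformly in $\e$. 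The upper bound $g_\e(t)\leq C(t)\,h$ then follows from this together with $\dot\varphi_\e\leq C$ (itself proved via the quantity $t\dot\varphi_\e-\varphi_\e-nt$, not the raw heat equation) and $\tr_h g\leq\frac{\det g}{\det h}(\tr_g h)^{n-1}$ --- the mirror image of your chain, but with the steps in an order that actually closes. Two smaller points: your stated bound $\|\varphi_\e\|_{L^\infty}\leq C$ does not give $\varphi\to0$ in $L^\infty$ as $t\to0$; you need the barriers $nt\log t-Ct\leq\varphi_\e\leq Ct$ of Claim \ref{approx-com}. And smoothness up to $t=0$ on $U$ is not ``standard interior parabolic regularity'': the global bounds degenerate as $t\to0$, so one needs a genuinely local argument (as in \cite{HuangLeeTam2019}, or pseudolocality) producing $\e$-uniform two-sided metric bounds on compact subsets of $U$ up to $t=0$.
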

In particular, if $h$ is a \KE metric with bounded curvature and negative Einstein constant, then any \K metric bounded from above by some multiples of $h$ can be deformed to $h$ along the normalized \KR flow. When $n=1$, the existence of Ricci flow starting from incomplete metric has been studied in details by Giesen and Topping \cite{GiesenTopping2010,GiesenTopping2011,GiesenTopping2013} where they do not require any boundedness on $\omega_0$. In contrast with \cite[Theorem 1.1]{HuangLeeTam2019}, we remove the boundedness of $|\nabla^h g_0|$ when $h$ has bounded curvature and $g_0$ is K\"ahler.

 Our approach in proving Theorem \ref{mainthm} is similar in spirit to that taken in \cite{LeeTam2017} (see also \cite{SimonTopping2017,Hochard2016}). We make use of an iterative process using the Chern-Ricci flow to produce a solution $g_R(t)$ to \eqref{krf} on $B_{h}(p, R)\times[0, T)$ with $g_R(0)=g_0$.  Estimates for $g_R(t)$ over a compact subset $S\subset \subset B_{h}(p, R)$ are then established which depend only on $n,  c_1, c_2, K, S$ thus allowing us to let $R\to \infty$ to obtain a smooth limit solution $g(t)$ on $M\times [0, T)$ with the desired properties.

 The paper is orgainzed as follows.  In \S 2 we prove our main local estimates for \K Ricci flow.  In \S 3 we recall some basic results for Chern Ricci flow from \cite{LeeTam2017}.  Then in \S 4, 5, 6, 7 we prove Theorems \ref{mainthm}, \ref{non-smooth}, \ref{Stability} and \ref{instantaneous} respectively.



{\it Acknowledgement}: The authors would like to thank Luen-Fai Tam and Xiangwen Zhang for the interest in this work.

\section{estimates for \K Ricci flow}

 In this section we establish basic estimates for solutions to \eqref{krf}.   Here, $(M, g_0)$ is a smooth \K manifold and $h$ is another smooth \K metric on $M$ with bisectional curvatures bounded by $K=1$.  Theorem \ref{local-Lip}, Lemma \ref{curv-esti} and Lemma \ref{l-curv1} are purely local in nature and $B_{g_0}(p, r)$ there refers to an arbitrary ball of radius $r$ relative to $g_0$. Then in Proposition \ref{prop1} we establish a global estimate for smooth solutions  to \eqref{krf} assuming a priori they are uniformly equivalent to $h$ for all $t$.

 Given a solution $g(t)$ to \eqref{krf} we will denote by $\omega(t)$ the corresponding family of \K forms.   We will also let $\omega_h$ denote the \K form corresponding to $h$.  We will let 

 $$\varphi(t)=\int^t_0 \log\frac{\omega^n (s)}{\omega_h^n}\;ds.$$
In particular, applying $\ddb$ to both sides and using \eqref{krf} and the local formula for the Ricci tensor of a \K metric allows us to write
\begin{align}
\omega(t)=\omega_0 -t\Ric(h)+\ddb \varphi.
\end{align}

 We also recall the following evolution equations for \eqref{krf} (see for example \cite{SongWeinkove2013}).   Here $\Psi_{ij}^k=\Gamma(g(t))_{ij}^k-\Gamma(h)_{ij}^k$.  
\begin{equation}\label{genereal-equ}
\begin{split}
\heat tr_gh &=-g^{i\bar j} g^{p\bar q}h_{k\bar l} \Psi_{pi}^k \Psi_{\bar q\bar j}^{\bar l}+g^{i\bar j}g^{p\bar q}  R^h_{i\bar jk\bar l};\\
\heat |\Psi|^2&=-|\nabla \Psi|^2-|\bar\nabla \Psi|^2-2{\bf Re}\left(g^{r\bar s}g^{i\bar j}g^{p\bar q}g_{k\bar l}\Psi_{\bar j\bar q}^{\bar l}\nabla_r \hat R_{i\bar sp}^k  \right);\\
\heat |\Rm|^2&\leq -|\nabla \Rm|^2-|\bar\nabla \Rm|^2+C_n|\Rm|^3.
\end{split}
\end{equation}

 For notational convenience, we adopt the notation $a\wedge b=\min\{a,b\}$ for any real numbers $a, b$ below.  
The main estimate in this section is the following

\begin{thm}\label{local-Lip}
For any $a,\lambda>1$, there is $\tilde T(n,a,\lambda)>0$ and $C_0(n),c_1(n)>1$ such that the following is true. Let $g(t)$ be a solution to the \KR flow on $B_{g_0}(p,1)\times [0,T]$ so that
\begin{enumerate}
\item $|\Rm(h)|\leq 1$ on $B_{g_0}(p,1)$;
\item $\lambda^{-1}h\leq g_0\leq \lambda h$ on $B_{g_0}(p,1)$.
\item $|\mathrm{Rm}(g(t))|\leq a t^{-1}$ on $B_{g_0}(p,1)\times (0,T]$;
\item $|\varphi|\leq at$ on $B_{g_0}(p,1)\times [0,T]$;
\end{enumerate} 
Then for all $t \in [0,T \wedge \tilde T]$ we have $B_t(p,1/4)\subset B_0(p,1)$, and for all $x\in B_t(p,1/4)$ we have $$C_0^{-1} \lambda ^{-1} h\leq g(t)\leq   C_0 \lambda^{c_1}h.$$
\end{thm}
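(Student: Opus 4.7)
First, I would establish the distance inclusion $B_t(p,1/4)\subset B_0(p,1)$ using hypothesis (3). Since $|\Rm(g(t))|\leq a/t$ gives $|\Ric(g(t))|\leq na/t$, a Hamilton--Perelman type distance distortion estimate yields
\[
d_{g(t)}(x,y)\geq d_{g_0}(x,y)-C(n)\int_0^t\sqrt{a/s}\,ds=d_{g_0}(x,y)-2C(n)\sqrt{at},
\]
so choosing $\tilde T$ with $2C(n)\sqrt{a\tilde T}<1/4$ forces $B_t(p,1/4)\subset B_0(p,1/2)\subset B_0(p,1)$.

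For the upper bound $g(t)\leq C_0\lambda^{c_1}h$, I would combine the parabolic Schwarz lemma with an Aubin--Yau type computation. Using $|\Rm(h)|\leq 1$ and the standard computation one obtains
\[
\left(\tfrac{\partial}{\partial t}-\Delta_g\right)\log\tr_hg\leq C(n),
\]
while $\omega(t)=\omega_0-t\Ric(h)+\ddb\varphi$ yields
\[
\left(\tfrac{\partial}{\partial t}-\Delta_g\right)\varphi=\dot\varphi-n+\tr_g\omega_0-t\tr_g\Ric(h),\qquad\dot\varphi=\log(\det g/\det h).
\]
Forming $Q=\log\tr_hg-A\varphi$ for $A=A(n)$ large, and using $\omega_0\geq\lambda^{-1}\omega_h$ together with AM--GM $\tr_g\omega_h\geq n(\det h/\det g)^{1/n}$, the inequality becomes self-improving: at a spatial maximum of $Q$, the term $-A\lambda^{-1}\tr_g\omega_h$ dominates and forces $\dot\varphi$ (and hence $Q$) to be bounded by $c_1(n)\log\lambda+C(n,a)$. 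Combined with the hypothesis $|\varphi|\leq at$ and the initial bound $\tr_hg_0\leq n\lambda$, exponentiation gives the claimed upper bound.

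For the lower bound $g(t)\geq C_0^{-1}\lambda^{-1}h$, I would argue analogously: from \eqref{genereal-equ} one has a Schwarz-type evolution for $\log\tr_gh$ driven by $\Rm(h)$, and forming $\tilde Q=\log\tr_gh+A\varphi$ together with the just-established upper bound on $g(t)$ closes the inequality, giving $\tr_gh\leq C_0\lambda$ (using $\tr_{g_0}h\leq n\lambda$), which inverts to the lower bound on $g(t)$. To localize either argument to $B_t(p,1/4)$, I would multiply the auxiliary quantity by a cutoff $\eta(d_0(p,\cdot))$ supported in $B_0(p,1)$; by the first step this cutoff is identically $1$ on $B_t(p,1/4)$, and the error terms $|\Delta_g\eta|$, $|\nabla_g\eta|^2_g$ are controlled via hypothesis (3) and Hessian comparison for $d_0$. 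The main obstacle is precisely this localization: the natural cutoff is in terms of $d_0$ while the maximum principle operates through $\Delta_g$, so the bad cutoff terms must be absorbed by the Aubin--Yau good term $-A\tr_g\omega_0$, and keeping the constants $C_0,c_1$ purely dimensional requires careful tracking of how $\lambda$ enters each step.
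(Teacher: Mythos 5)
Your overall strategy (maximum principle applied to Schwarz--Aubin--Yau quantities coupled to the potential $\varphi$, after a shrinking-balls step giving $B_t(p,1/4)\subset B_0(p,1)$) is the same family of argument as the paper's, and your first step is essentially \cite[Lemma 3.2]{SimonTopping2016}. But the localization, which you yourself flag as ``the main obstacle,'' is where the proposal genuinely breaks down. You cut off with $\eta(d_0(p,\cdot))$ and claim $|\Delta_{g(t)}\eta|$ and $|\nabla\eta|^2_{g(t)}$ are controlled ``via hypothesis (3) and Hessian comparison for $d_0$.'' Hypothesis (3) bounds the curvature of the \emph{evolving} metric and says nothing about $\nabla^2 d_0$; Hessian comparison for $d_0$ would require a curvature bound on $g_0$, which is precisely what the theorem does not assume ($g_0$ is only uniformly equivalent to $h$ and may have unbounded curvature); and even $|\nabla d_0|^2_{g(t)}$ and $\Delta_{g(t)}d_0$ require comparing $g(t)$ with $g_0$ (equivalently with $h$), i.e.\ the very estimate being proved, so the scheme is circular. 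The paper's way out is to build the cutoffs from the \emph{evolving} distance: $\eta(x,t)=d_{g(t)}(x,p)+\tilde\beta_n\sqrt{at}$, so that hypothesis (3) plus Perelman's Lemma 8.3 gives $\heat\bigl(d_t(\cdot,p)+c_1(n)\sqrt{at}\bigr)\geq 0$ in the barrier sense, the shrinking-balls lemma keeps the support inside $B_{g_0}(p,1)$, and Calabi's trick handles the nonsmoothness of $d_t$. Without this (or some substitute), neither of your two maximum-principle steps can be run on the local solution.

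There are also problems in the estimates themselves. The displayed inequality $\heat\log\tr_hg\leq C(n)$ is false as stated; the correct right-hand side is $C\tr_gh$ (you do implicitly use the correct structure when you absorb with $-A\lambda^{-1}\tr_g\omega_h$, but the write-up should reflect it). More seriously, in the lower-bound step the sign of the potential term in $\tilde Q=\log\tr_gh+A\varphi$ is wrong: since $\heat(A\varphi)=A\dot\varphi-An+A\tr_g(\omega_0-t\Ric(h))\geq A\dot\varphi-An+\frac{A}{2\lambda}\tr_gh$, adding $+A\varphi$ produces a \emph{bad} term of size $\frac{A}{2\lambda}\tr_gh$ rather than the good term you need; one must use $-A\varphi$, and then the resulting term $-A\dot\varphi=A\log\frac{\det h}{\det g}$ is \emph{not} controlled by the ``just-established upper bound on $g(t)$'' (an upper bound on $g$ bounds $\dot\varphi$ from above, not below), so the inequality does not close as claimed. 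The paper handles exactly this term in Claim \ref{sub-lma-1} by adding $nLt(\log t-2)$ and using $\log\frac{\det h}{\det g}\leq n\log\tr_gh$ together with $\log x\leq x$ for $t$ small, which is also what keeps the lower bound linear in $\lambda$ with purely dimensional $C_0,c_1$; it then gets the upper bound on $g$ from a separate bound $\dot\varphi\leq C_n\log\lambda$ (via $\tilde\Phi\,\dot\varphi^2/(1+L\varphi)$) combined with $\tr_hg\leq\frac{\det g}{\det h}(\tr_gh)^{n-1}$. Your single-quantity argument for the upper bound could likely be salvaged at the interior maximum, but as written the lower-bound step and the localization both contain genuine gaps.
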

\begin{proof}
We begin by constructing appropriate barrier functions for our arguments.  By the shrinking ball lemma \cite[Lemma 3.2]{SimonTopping2016}, for $t\in [0,T]$
\begin{align}\label{shrinkingball}
B_{g(t)}(p,1-\b_n \sqrt{at})\subset B_{g_0}(p,1)
\end{align}
for some constant $\b_n$.  By \cite[Lemma 8.3]{Perelman2002} with $\displaystyle K=\frac{a}{t}$ and $\displaystyle r=\sqrt{\frac{t}{a}}$ there, we may infer that 
\begin{align}\label{E1}
\heat \left[d_t(x,p)+c_1(n)\sqrt{at}\right] \geq 0
\end{align}
 in the sense of barriers whenever $d_t(x,p)\geq \sqrt{a^{-1}t}$.  Here $d_t(x,p)=dist_{g(t)}(x, p)$.  Denote $\eta(x,t)=d_t(x,p)+\tilde \b_n\sqrt{a t}$ where $\tilde \b_n\geq \max\{c_1(n),\b_n\}$. Let $\phi$ be a smooth function on $[0,+\infty)$ so that $\phi\equiv 1$ on $[0,\frac{3}{4}]$, vanishes outside $[0,1]$ and satisfies $-100{\phi}^{3/4}\leq \phi'\leq 0;  \phi''\geq -100\phi $. 

 We will make use of the evolving barrier $\log \Phi(x, t):=\log \phi\left(\eta(x,t) \right)$.   By \eqref{shrinkingball} and \eqref{E1}, we may choose $\tilde T$ sufficiently small depending on $n, a$ such that for all $t\in T\wedge \tilde T$,

\begin{enumerate}
\item $Domain( \log \Phi(x, t))\subset B_{g_0}(p,1)$ 
\item
$B_{g(t)}(p,1/2) \subset  \{x: \log \Phi(x, t)=0\}$
\item
$\heat (\log \Phi ) \leq C_n /\Phi^2$ 

\end{enumerate}
We will also make use of the cut off function $\wt \Phi(x,t)=\phi \left(2 \eta(x,t) \right)$.  Then similarly we may choose $\tilde T$ sufficiently small depending on $n, a$ such that for all $t\in T\wedge \tilde T$,

\begin{enumerate}
\item $B_{g(t)}(p,1/2)\subset Domain(\wt \Phi (x, t))\subset  \{x:\log \Phi(x, t)\neq -\infty\}$
\item $B_{g(t)}(p,1/4) \subset  \{x: \wt \Phi(x, t)=1\}$
\item
$\heat (\wt \Phi )  \leq  C_n \wt \Phi$
\item $\frac{| \partial \wt \Phi |^2}{\wt \Phi^2} \leq C_n\wt \Phi^{-1/2}$ 
\end{enumerate}

The function $d_t(x,p)$ and hence barrier functions above are in general only Lipschitz continuous on $M$, though by Calabi's trick (see \cite[Section 7]{SimonTopping2016} for a detailed exposition) we may assume these to be smooth, and the differential inequalitites above to hold in the usual sense, at a given point where we argue by the maximum principle below.

\begin{claim}\label{sub-lma-1}
If $\tilde T(n,a,\lambda)$ is sufficiently small, then for all $x\in B_t(p,1/2), \;t\leq T\wedge\tilde T$,
\begin{align}\label{local-lip-bound}
\frac{1}{C(n)\lambda}h\leq  g(t).
\end{align}
\end{claim}
\begin{proof}[proof of Claim \ref{sub-lma-1}]

Consider the function 
\begin{align}
F(x,t)=\log \tr_gh+2\log \Phi-L\varphi+nLt (\log t -2)
\end{align}
for times $t\in [0,T \wedge \tilde T\wedge (nL)^{-1}]$ where $\Phi, \tilde T$ is as above and $L=6\lambda$.  By the properties of  the barrier $\log \Phi$, $F(x, t)$ attains a maximum value at some point $(x_0,t_0)$ in its domain.  If $t_0 =0$, then the Claim immeditely follows from the definition of $F$ and the hypothesis of the Theorem.  Now suppose $t_0 >0$.  Assuming that $ \tilde T \leq 1/2\lambda$, we have 
\begin{align}\label{ref-ineq}
\omega(t)=\omega_0-t\Ric(h)+\ddb \varphi \geq \frac{1}{2\lambda} \omega_h+\ddb\varphi, 
\end{align}
and then the evolution equation of $\log\tr_hg$ from \eqref{genereal-equ} gives the following at $(x_0,t_0)$
\begin{equation}
\begin{split}
 \heat F
&\leq  \tr_gh+\frac{C_n}{\Phi^2}+L \log \frac{\det h}{\det g}+L\Delta \varphi+nL\log t-nL\\
&\leq \tr_gh\cdot  \left(1-\frac{1}{2\lambda }L \right)+nL\log (t\tr_gh)+\frac{C_n}{\Phi^2}\\
&\leq -\tr_gh +\frac{C_n}{\Phi^2}
\end{split}
\end{equation}
  Here we have assumed that $tr_g h (x_0,t_0)\geq 1$ (without loss of generality) in the second inequality, and we have used the elementary inequality $\log x\leq x$ for all $x>0$ and $L=6\lambda$ in the last inequality.  By the maximum principle we then conclude that $ \tr_gh \leq \frac{C_n}{\Phi^2}$ at $(x_0,t_0)$ and thus
\begin{align}
F(x_0,t_0)\leq C_n+aLt_0+nLt_0(\log t_0-2)\leq \tilde C(n)
\end{align}
for some constant $ \tilde C(n)$ provided we further shrink $\tilde T$ if necessary depending only on $n, a, \lambda$.

On the other hand, we also have $F(0)\leq \log \lambda+\log n$.  Thus in summary, we conclude that for $t\in [0,T \wedge \tilde T\wedge (nL)^{-1}]$ we have
\begin{align}
F(x,t)\leq \max\{ \tilde C_n,\log \lambda+\log n\}
\end{align}
and the claim follows from the defninition of $F$ and the properties of the barrier $\log \Phi$.
\end{proof}

 To prove the Theorem, it suffices to obtain a local upper bound for the volume form of $g(t)$ in view of Claim \ref{sub-lma-1}.  We do this in the following

\begin{claim}\label{sub-lma-2}
There exists  $c(n), \tilde T(n,a,\lambda)>0$ such that for all $x\in B_t(p,1/4), \;t\leq T\wedge\tilde T$ we have
\begin{align}
\dot\varphi =\log \frac{\omega^n(t)}{\omega_h^n}\leq  c(n)\log \lambda.
\end{align}
\end{claim}

\begin{proof}[proof of Claim \ref{sub-lma-2}]

Consider the function 
$$G= \wt\Phi\frac{(\dot\varphi)^2}{1+L\varphi}$$
for $t\leq T\wedge \tilde T$ where $L>0$ is a constant to be chosen later.  Due to the cutoff function, $G$ attains its maximum at some point $(x_0,t_0)$ with $t_0\leq T\wedge \tilde T$. If $t_0=0$, then the upper bound in the claim is trivial. Now suppose $t_0>0$.   Assume $\tilde T \leq 1/(2\lambda)$ so that $\eqref{ref-ineq}$ gives $\hat g(t_0):=g_0-t_0\Ric (h)\leq h/(2\lambda)$. We calculate at $(x_0, t_0)$ that

\begin{equation}\label{evo-G}
\begin{split}
&\quad \heat \frac{(\dot\varphi)^2}{1+L\varphi}\\
&=-\frac{L(\dot\varphi)^2 }{(1+L\varphi)^2}\left(\dot\varphi-n+\tr_g \hat g \right)-\frac{2L^2\dot\varphi^2}{(1+L\varphi)^3}g^{i\bar j} \varphi_i \varphi_{\bar j}\\
&\quad -\frac{2\dot\varphi }{1+L\varphi}\tr_g (\Ric (h))-\frac{2 g^{i\bar j} \dot\varphi_i\dot\varphi_{\bar j} }{1+L\varphi}+\frac{4L\dot\varphi }{(1+L\varphi)^2}{\bf Re}\left(g^{i\bar j} \dot\varphi_i \varphi_{\bar j}\right)\\
&\leq -\frac{L(\dot\varphi)^2 }{(1+L\varphi)^2}\left(\dot\varphi-n+\tr_g \hat g \right)+\frac{2|\dot\varphi|}{1+L\varphi}\tr_gh\\
&\leq -\frac{L\dot\varphi^3}{(1+L\varphi)^2}+\frac{Ln\dot\varphi^2}{(1+L\varphi)^2}+\left[-\frac{L\dot\varphi^2}{2\lambda(1+L\varphi)^2}+\frac{2|\dot\varphi|}{1+L\varphi}\right]\tr_gh.
\end{split}
\end{equation}
By Claim \ref{sub-lma-1} and the properties of the cut off function $\wt \Phi$ we have the lower bound
\begin{align}
\dot\varphi=\log \frac{\det g}{\det h} \geq -n\log (2n\lambda).
\end{align}
on the support of $G$.  Therefore, we will assume that $\dot\varphi(x_0,t_0) \geq 1$ as the otherwise the Claim follows.
We may then choose $L=16\lambda$ and assume $\tilde T$ is sufficiently small depending only on $n, a, \lambda$ so that the last term in \eqref{evo-G} will be bounded above by $0$, and moreover, using $\nabla G (x_0, t_0)=0$ and the properties of $\wt \Phi$, that at $(x_0, t_0)$
\begin{equation}\label{E0}
\begin{split}
0&\leq \heat G \\
&\leq-\frac{L\dot\varphi^3}{(1+L\varphi)^2}\wt\Phi+\frac{Ln\dot\varphi^2}{(1+L\varphi)^2}\wt\Phi+C_nG+2\frac{|\partial  \wt\Phi|^2}{ \wt\Phi^2}G\\
&\leq -G^{3/2}\wt\Phi^{-1/2}+C_nG+ C_n \wt\Phi^{-1/2}G
\end{split}
\end{equation}
which in turn implies $G(x_0,t_0)\leq C(n)$ for some constant $C(n)$.  Thus $G(x, t) \leq C(n)(\log \lambda)^2$ on $M\times[0, T\wedge \tilde T]$, and by assuming $\tilde T$ is sufficiently smaller still depending on $n, a, \lambda$, by the properties of the cut off $\wt \Phi$ we may conclude that for  $x\in B_t(p,1/4)$ and $t\leq T\wedge \tilde T$ we have
\begin{align}
\dot\varphi \leq C_n\log \lambda
\end{align}
\end{proof}

The Theorem follows from combining Claim \ref{sub-lma-1} and Claim \ref{sub-lma-2} together with the following elementary inequality
$$\tr_hg \leq \frac{\det g}{\det h}(\tr_hg)^{n-1}.$$
\end{proof}

The next lemma shows that we have a local curvature estimate provided $g(t)$ stays uniformly equivalent to a good reference metric. In \cite{ShermanWeinkove2012}, Sherman-Weinkove showed essentially the same estimate but with less detail on the dependence of the various constants in the Lemma.
\begin{lma}\label{curv-esti}
For any $a,\Lambda>1$, there is $C_1(n,\Lambda),\hat T(n,a,\Lambda)>0$ so that the following holds. Suppose $g(t)$ is a solution to the \KR flow on $B_{g_0}(p,1)\times [0,T]$ so that
\begin{enumerate}
\item $|\nabla Rm(h)|+|\Rm(h)|^{3/2}\leq 1$ on $B_{g_0}(p,1)$;
\item $|\mathrm{Rm}(g(t))|\leq at^{-1}$ on $B_{g_0}(p,1)\times (0,T]$;
\item $\Lambda^{-1} h\leq g(t)\leq \Lambda h$ on $B_{g_0}(p,1)$, $t\in [0,T]$.
\end{enumerate} 
Then  for all $x\in B_t(p,1/4),\; t\leq \hat T\wedge T$,
$$t|\Rm(x,t)|+t^{3/2}|\nabla\Rm|\leq C_1.$$
\end{lma}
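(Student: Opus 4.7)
The plan is to adapt a Shi-type local maximum-principle argument to this K\"ahler setting, using the uniform metric equivalence $\Lambda^{-1}h \leq g(t) \leq \Lambda h$ to obtain bounds on $t|\Rm|$ and $t^{3/2}|\nabla \Rm|$ with constants depending only on $(n,\Lambda)$ rather than on $a$. Throughout I employ a smooth space-time cutoff $\eta$ constructed exactly as in the proof of Theorem \ref{local-Lip}: by the shrinking-ball lemma of Simon-Topping and Perelman's distance-evolution estimate (which applies via hypothesis (2)), for $\hat T(n,a)$ small one can take $\eta$ equal to $1$ on $B_t(p,1/4)$, supported in $B_{g_0}(p,1)$, and satisfying $\heat \eta \leq C_n$ and $|\nabla \eta|^2 \leq C_n \eta$ via the Calabi trick. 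The proof then splits into three stages.

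Step 1 (local bound on $|\Psi|^2$). From the first two identities of \eqref{genereal-equ}, the metric equivalence, and hypothesis (1), I derive $\heat \tr_g h \leq -c(\Lambda)|\Psi|^2 + C(\Lambda)$ (using $|\Psi|^2_{h,g}\geq c(\Lambda)|\Psi|^2$ and $|\Rm(h)|\leq 1$) and $\heat|\Psi|^2 \leq -|\nabla\Psi|^2 + C(\Lambda)|\Psi|$ (using $|\nabla \Rm(h)|\leq 1$). Applying the maximum principle to the auxiliary function
\[
Q_1 = \eta^2\bigl(A\,\tr_g h + t|\Psi|^2\bigr)
\]
for $A = A(n,\Lambda)$ large enough that $-Ac(\Lambda)|\Psi|^2$ absorbs both the $+|\Psi|^2$ arising from $\partial_t(t|\Psi|^2)$ and the Young-split $C(\Lambda)t|\Psi|$, and using $|\nabla \eta|^2 \leq C_n \eta$ to control cutoff-gradient errors, one obtains $Q_1 \leq C(n,\Lambda)$ on $[0,\hat T]$. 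In particular $|\Psi|^2 \leq C(n,\Lambda)/t$ on $B_t(p,1/4)$.

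Step 2 (local bound on $t|\Rm|$). The key additional ingredient is the schematic identity $\Rm(g) - \Rm(h) = \bar\partial \Psi + \Psi \ast \Psi$, which yields the reverse Shi-type inequality $|\nabla \Psi|^2 \geq c(\Lambda)|\Rm(g)|^2 - C(\Lambda)(|\Psi|^4 + 1)$. Combined with Step 1 this gives $\heat(t|\Psi|^2) \leq -c(\Lambda)\,t|\Rm|^2 + C(n,\Lambda)/t$, while hypothesis (2) and the third identity in \eqref{genereal-equ} yield $\heat(t^2|\Rm|^2) \leq (2+C_n a)\,t|\Rm|^2 - t^2|\nabla \Rm|^2$ since $C_n t^2|\Rm|^3 \leq C_n a\,t|\Rm|^2$. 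I then apply the maximum principle to
\[
Q_2 = \eta^{2k}\bigl(B\,t|\Psi|^2 + t^2|\Rm|^2\bigr)
\]
with $B = 2(2+C_n a)/c(\Lambda)$ and $k = k(n)$ large enough that the cutoff Young losses are dominated; the $-Bc(\Lambda)\,t|\Rm|^2$ damping then beats the bad $(2+C_n a)\,t|\Rm|^2$. The crucial cancellation $B/(2+C_n a) = 2/c(\Lambda)$, together with the pointwise $tP \leq C(n,\Lambda)$ from Step 1, makes the final estimate $a$-independent, yielding $t^2|\Rm|^2 \leq C(n,\Lambda)$ on $B_t(p,1/4)$ for $t \leq \hat T(n,a,\Lambda)$.

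Step 3 (local bound on $t^{3/2}|\nabla \Rm|$) is a standard second-stage Shi iteration using $\heat|\nabla \Rm|^2 \leq -|\nabla^2 \Rm|^2 + C_n|\Rm||\nabla \Rm|^2$ and the test function $Q_3 = \eta^{2k}(D\,t^2|\Rm|^2 + t^3|\nabla \Rm|^2)$ for $D = D(n,\Lambda)$ large; the bad term $C_n|\Rm||\nabla \Rm|^2 \leq (C(n,\Lambda)/t)|\nabla \Rm|^2$ (using Step 2) is absorbed by the $-Dt^2|\nabla \Rm|^2$ produced when $\partial_t$ hits $Dt^2|\Rm|^2$ via $\heat|\Rm|^2$. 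The principal technical obstacle is Step 2: ensuring $C_1$ depends only on $(n,\Lambda)$ and not on $a$ requires the pointwise $|\Psi|^2 \leq C/t$ from Step 1 (an averaged version would not suffice), the reverse Shi inequality to convert that $\Psi$-control into genuine $|\Rm|^2$-damping, and the precise linear scaling $B \propto (2+C_n a)$ so that the $a$-dependence cancels in the final maximum-principle estimate.
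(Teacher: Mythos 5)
Your Steps 1 and 3 are fine and essentially parallel the paper: Step 1 is the paper's argument with $F=t\Phi|\Psi|^2+L\,\tr_gh$, and for the gradient bound the paper simply invokes Shi's local derivative estimates rather than your $Q_3$ iteration. The genuine gap is in Step 2: as structured, your argument does not produce a constant depending only on $(n,\Lambda)$. Because you absorb the cubic term via hypothesis (2), $C_nt^2|\Rm|^3\leq C_na\,t|\Rm|^2$, you are forced to take $B\sim a$ in $Q_2=\eta^{2k}\bigl(Bt|\Psi|^2+t^2|\Rm|^2\bigr)$. The cancellation $B\,c(\Lambda)=2(2+C_na)$ only controls $t^2|\Rm|^2$ \emph{at the maximum point} of $Q_2$; to bound $t^2|\Rm|^2$ at an arbitrary point of $B_t(p,1/4)$ you must pass through $t^2|\Rm|^2\leq Q_2\leq \max Q_2$, and $\max Q_2$ still contains the term $\eta^{2k}Bt|\Psi|^2\leq C(n,\Lambda)B\sim C(n,\Lambda)\,a$ (there is no lower bound on $t|\Psi|^2$ at the point you care about, so you cannot subtract it off). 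What you actually prove is $t|\Rm|\leq C(n,\Lambda)\sqrt{1+a}$, not $t|\Rm|\leq C_1(n,\Lambda)$. This is not cosmetic: the $a$-independence of $C_1$ is exactly what the later induction needs, since in Definition \ref{defn1} and Lemma \ref{mainclaim} the constant $a$ is itself defined from $C_1$ (through $B$ and $\mu$); an $a$-dependent $C_1$ would make that definition circular.

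The fix is the Sherman--Weinkove structure the paper uses, and it is precisely what your reverse-Shi inequality is for: instead of adding $Bt|\Psi|^2$ with a large weight, put the Step-1 quantity into a bounded denominator, $G=\tilde\Phi\,t^2|\Rm|^2/(A-\tilde F)$ with $\tilde F=t|\Psi|^2+L\,\tr_gh$ and $A=A(n,\Lambda)$ chosen so that $A/2\leq A-\tilde F\leq 2A$ on the support of $\tilde\Phi$. Then $\heat \tilde F\leq -t|\nabla\Psi|^2-t|\bar\nabla\Psi|^2+C$ enters multiplied by $t^2|\Rm|^2(A-\tilde F)^{-2}$, and your inequality $|\nabla\Psi|^2\geq c(\Lambda)|\Rm|^2-C(\Lambda)(|\Psi|^4+1)$ turns this into a \emph{quartic} damping $-C^{-1}(n,\Lambda)\,t^3|\Rm|^4$, which absorbs $C_nt^2|\Rm|^3$ by Young's inequality with constants depending only on $(n,\Lambda)$; hypothesis (2) is then used only in constructing the cutoffs (shrinking balls and Perelman's distance estimate) and so affects only $\hat T$, exactly as the paper states. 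A second, smaller point: with a single cutoff $\eta$ you are not entitled to the pointwise bound $t|\Psi|^2\leq C(n,\Lambda)$ on the whole support of the Step-2 test function, since Step 1 only yields $t|\Psi|^2\leq C\eta^{-2}$ there; you need nested cutoffs as in the paper ($\Phi$ giving the estimate on $B_t(p,1/2)$, then $\tilde\Phi$ supported inside that region).
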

\begin{proof}

We will consider the cutoff functions $\Phi(x, t)=\phi(\eta(x, t))$ and $\tilde \Phi= \phi(2\eta(x,t))$ as in the proof of Theorem \ref{local-Lip}, and we may choose $\hat T(n,a,\Lambda)$ sufficiently small so that $\Phi$ and $\tilde \Phi$ satisfy the same conditions there for all $t\leq T\wedge \hat T$.  The dependence of $a$ will in fact only appear in $\Phi$ and $\tilde \Phi$ and hence $\hat T$.

We follow the proof in \cite{ShermanWeinkove2012}. We will use $C_i$ to denote constants depending only on $n,\Lambda$ but not $a$.     

Denote $\Psi=\Gamma_{g(t)}-\Gamma_h$. Consider the function $F=t\Phi Q+L\cdot \tr_gh$ where $L$ is a constant to be chosen later and $Q=|\Psi|^2$. Using the evolution equation \eqref{genereal-equ}, we have 
\begin{equation}
\begin{split}
\heat F&=\Phi Q+t \left[\Phi \heat Q+Q \heat \Phi\right]\\
&\quad -2t{\bf Re}\left(g^{i\bar j} \Phi_i Q_{\bar j} \right)+L \heat \tr_gh \\
&\leq \Phi Q+ t\Phi \left[-|D \Psi|^2+C_2 Q \right]\\
&\quad +100t \Phi Q+C_ntQ^\frac{1}{2} |D \Psi||D \Phi|+ L\left( -\Lambda^{-1}Q +C_2\right)\\
&\leq Q\left(-L\Lambda^{-1}+C_3\right)+C_3\\
&\leq -Q+C_3.
\end{split}
\end{equation}
where the last inequality holds provided we choose $L=\Lambda(C_3+1)$.   By the maximum principle, we conclude that  $Q\leq C_3$ at a point where $F$ is maximal for $t\leq T\wedge \hat T$, and we conclude from this, the defintion of $F$ and the propoerties of the cutoff $\Phi$ that we have 
\begin{align}\label{intermediate-esti}
tQ(x,t)&\leq C_2.
\end{align}
on $B_t(p,\frac{1}{2})$  for all $t\leq T\wedge \hat T$

 Now we consider the function $$G=\frac{\wt\Phi t^2|\Rm|^2}{A-\wt F }$$ where $\tilde F:=tQ+L \tr_gh$. By \eqref{intermediate-esti} and our previous estimates, we may assume $A$ is sufficiently large so that $2A\geq A-\tilde F\geq \frac{1}{2}A$ on the support of $\wt\Phi$.   On the support of $\wt\Phi$ we have 
\begin{equation}
\begin{split}
\heat \wt F&\leq -t|\nabla \Psi|^2-t|\bar\nabla \Psi|^2+C_4.
\end{split}
\end{equation}
Thus we have
\begin{equation}\label{F0}
\begin{split}
&\quad \heat \left[ t^2|\Rm|^2(A-\tilde F)^{-1}\right]\\
&\leq (A-\tilde F)^{-1}\left[C_nt^2|\Rm|^3-t^2|\nabla \Rm|^2-t^2|\bar\nabla \Rm|^2 +2t|\Rm|^2\right]\\
&\quad +t^2|\Rm|^2(A-\tilde F)^{-2}\left[-t|\nabla \Psi|^2-t|\bar\nabla \Psi|^2+C_5 \right]\\
&\quad -2(A-\tilde F)^{-3} t^2|\Rm|^2|\nabla \tilde F|^2-2t^2(A-\tilde F)^{-2} {\bf Re}\left(g^{i\bar j}|\Rm|^2_i \cdot \tilde F_{\bar j} \right)\\
&\leq -C_6^{-1}t^3|\Rm|^4+C_6t^{-1}.
\end{split}
\end{equation}
Now suppose $(x_0,t_0)$ is the point where $G$ is maximal for $t\leq T\wedge \tilde T$.  Then either $t_0=0$ and then $G(t)$ thus $|Rm(g(t))|$ vanishes for all $t$, or else $t_0>0$ and from \eqref{F0} and the properties of $\wt \Phi$ and the fact that $\nabla G(x_0,t_0)=0$
\begin{equation}
\begin{split}
0\leq \heat G\leq C_7\tilde\Phi^{-1} G+C_7t^{-1}-C_7^{-1}\tilde\Phi t^3|\Rm|^4
\end{split}
\end{equation}
and hence $G(x_0,t_0)\leq C_8$.  In particular, we have shown that if $t\leq  T\wedge \hat T$, $x\in B_t(p,1/4)$, then 
\begin{align}
t|\Rm(g(t))|\leq C_9.
\end{align}
The first order estimate $|\nabla Rm(g(t))|$ follows from Shi's derivative estimate, see for example \cite{CaoChenZhu2008}.
\end{proof}

We also need the following local estimate from \cite[Propositon A.1]{LottZhang2013}.
\begin{lma}\label{l-curv1}
For any $A_1,n>0$, there exists $B(n,A_1)$ depending only on $n, A_1$ such that the following holds: For any \K manifold $(N^n,g_0)$ (not necessarily complete), suppose $g(t), t\in [0,T]$ is a solution of \K Ricci flow on $B_{0}(x_0,r) \subset N$ such that 
$$|Rm_{g(0)}|\leq A_1r^{-2} \quad\text{and }\quad |\nabla_{g_0} \Rm(g_0)|\leq A_1r^{-3}. $$
on $B_{0}(x_0,r)$ and
$$A_1^{-1}g_0\leq g(t)\leq A_1 g_0$$
  on $B_{0}(x_0,r)]\times[0, T]$.  Then we have
$$|\Rm|(g(t))\leq Br^{-2}$$
on $B_0(x_0,\frac r8)\times [0,T]$.
\end{lma}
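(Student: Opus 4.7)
The plan is to follow the Bernstein-type maximum principle strategy used in the proof of Lemma \ref{curv-esti}, but with the initial metric itself playing the role of the reference metric $h$. By parabolically rescaling $\tilde g(\tilde t) := r^{-2} g(r^2 \tilde t)$, it suffices to treat $r = 1$: in the rescaled picture the hypotheses become $|\Rm(\tilde g_0)| + |\nabla \Rm(\tilde g_0)| \leq 2 A_1$ and $A_1^{-1} \tilde g_0 \leq \tilde g(\tilde t) \leq A_1 \tilde g_0$ on $B_0(x_0, 1)$, and the target becomes $|\Rm(\tilde g(\tilde t))| \leq B$ on $B_0(x_0, 1/8)$. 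Take $h := \tilde g_0$ as the reference metric, so that $\Psi = \Gamma(g(t)) - \Gamma(h)$ vanishes at $t = 0$, supplying the correct initial data.

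First I would establish a local bound on $Q := |\Psi|^2$. Using the middle equation of \eqref{genereal-equ}, the bound $|\nabla\Rm(h)| \leq 2A_1$, and Cauchy--Schwarz,
$$\heat Q \leq -|\nabla\Psi|^2 - |\bar\nabla\Psi|^2 + C(n,A_1)(Q+1),$$
while the first equation of \eqref{genereal-equ} combined with metric equivalence gives $\heat \tr_g h \leq -A_1^{-1} Q + C(n,A_1)$. Consider $F_1 := \Phi\, Q + L\, \tr_g h$ with a spatial cutoff $\Phi = \phi(d_{g_0}(\cdot, x_0))$ supported in $B_0(x_0, 1)$ and equal to $1$ on $B_0(x_0, 1/2)$, smoothed by Calabi's trick as in the proof of Theorem \ref{local-Lip}. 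Since $g_0$ has bounded curvature on $B_0(x_0,1)$, Hessian comparison controls $\Delta_{g_0}\Phi$, and metric equivalence transfers this to $\Delta_{g(t)}\Phi$ up to constants in $n, A_1$. Choosing $L = L(n, A_1)$ sufficiently large forces the coefficient of $Q$ in $\heat F_1$ to be negative modulo a constant, and the parabolic maximum principle yields $Q \leq C(n, A_1)$ on $B_0(x_0, 1/2) \times [0, T]$.

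Second, with $Q$ under control, the argument from the second half of Lemma \ref{curv-esti} can be replayed without time weights. Consider
$$G := \frac{\tilde\Phi\, |\Rm|^2}{A - \tilde F},$$
where $\tilde F := Q + L\,\tr_g h$, $\tilde\Phi$ is a cutoff supported in $B_0(x_0, 1/2)$ equal to $1$ on $B_0(x_0, 1/8)$, and $A = A(n, A_1)$ is chosen so that $A - \tilde F$ is comparable to $A$ on the support of $\tilde\Phi$. The third inequality of \eqref{genereal-equ} for $|\Rm|^2$ together with $\heat \tilde F \leq -|\nabla\Psi|^2 - |\bar\nabla\Psi|^2 + C(n,A_1)$ yields, at an interior maximum of $G$, a differential inequality of the shape
$$0 \leq \heat G \leq -c(n,A_1)\,\tilde\Phi\,|\Rm|^4 + C(n,A_1)\bigl(1 + \tilde\Phi^{-1/2}\bigr) G,$$
where the quartic term arises after using $\nabla G(x_0, t_0) = 0$ to absorb cross terms in $|\nabla\Rm||\nabla\Psi|$ via Kato's inequality and the positive contribution coming from $|\Rm|^2\,|\nabla\tilde F|^2/(A - \tilde F)^3$. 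This forces $G \leq C(n, A_1)$ at the maximum, and hence on the full support of $\tilde\Phi$. Unpacking and undoing the scaling gives $|\Rm(g(t))| \leq B(n,A_1)\,r^{-2}$ on $B_0(x_0, r/8) \times [0,T]$.

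The main technical obstacle is the construction and analysis of the cutoff: since $d_{g_0}(\cdot, x_0)$ is only Lipschitz and $g(t) \neq g_0$, one must argue in the barrier sense via Calabi's trick exactly as earlier in the paper. A secondary subtlety is ensuring that $B$ is independent of $T$; this is automatic because $Q(0) = 0$ and $|\Rm(g_0)|$ is already bounded, so the maximum principle is of steady-state type and does not degenerate as $t \to 0$.
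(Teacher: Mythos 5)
The paper never proves this lemma: it is quoted from Lott--Zhang \cite[Proposition A.1]{LottZhang2013}, whose argument is itself a localized Sherman--Weinkove-type maximum principle estimate. Your proposal reconstructs essentially that argument, i.e. the scheme of Lemma \ref{curv-esti} with the reference metric taken to be $g_0$ itself. That specialization is exactly the right move: since $\Psi(0)=0$ and $|\Rm(g_0)|\leq A_1r^{-2}$, the time weights $t$, $t^2$ in the test functions become unnecessary, and the uniform equivalence $A_1^{-1}g_0\leq g(t)\leq A_1g_0$ lets you work with a static cutoff in $d_{g_0}$ rather than the evolving distance, so no shrinking-ball or Perelman-type lemma is needed. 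The cutoff analysis does go through as you say: the complex Laplacian of a function contains no Christoffel symbols, so an upper barrier for $\ddb\, d_{g_0}$ (Hessian comparison plus Calabi's trick) traced against $g^{-1}\leq A_1 g_0^{-1}$ bounds $-\Delta_{g(t)}\Phi$ from above, and the $|\partial\Phi|^2/\Phi$ and $\phi''$ terms are handled by the same choice of $\phi$ as in Theorem \ref{local-Lip}. Your first step then gives $|\Psi|^2\leq C(n,A_1)$ on the half ball, as in \eqref{intermediate-esti} but without the factor $t$.

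One point in your second step is stated incorrectly and should be fixed, although the correct ingredient is implicit in the computation \eqref{F0} you propose to replay. The good term $-c\,|\Rm|^4$ does not come from $\nabla G(x_0,t_0)=0$, Kato's inequality, or the term $|\Rm|^2|\nabla\tilde F|^2(A-\tilde F)^{-3}$ (that term carries a favorable sign and is used only to absorb the cross term ${\bf Re}(g^{i\bar j}|\Rm|^2_i\tilde F_{\bar j})$). It comes from pairing the negative term $-\bigl(|\nabla\Psi|^2+|\bar\nabla\Psi|^2\bigr)|\Rm|^2(A-\tilde F)^{-2}$, produced by $\heat\tilde F$, with the pointwise relation between the two curvatures, schematically $\Rm(g)=\Rm(g_0)-\bar\nabla\Psi$, which gives $|\bar\nabla\Psi|^2\geq c\,|\Rm(g)|^2-C(n,A_1)$; only then can the cubic reaction term $C_n|\Rm|^3$ be absorbed. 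With that observation inserted, your inequality at the maximum of $G$ follows and the conclusion $|\Rm(g(t))|\leq B(n,A_1)r^{-2}$ on $B_0(x_0,r/8)$ is obtained as claimed. A minor caveat, shared by the paper's own local lemmas, is that on an incomplete ball minimizing geodesics need not exist, so the distance-based cutoff and comparison arguments must be interpreted with the usual care near the cut locus and the boundary; this is standard and does not affect the dependence of $B$ on $(n,A_1)$ only.
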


The next proposition shows that the global lower bound of a complete solution is continuous in the following sense 
\begin{prop}\label{prop1}
Let $g(t)$ be a smooth solution to \eqref{krf} on $M\times [0,T]$ such that $g(t)\in S(c_1, c_2,h)$ for all $t\in [0, T]$and some constants $c_i>0$.  If $g(0)\geq h$ then we have
\begin{equation}\label{dd1}g(t)\geq e^{-{nKt}/{c_1}} h\end{equation}
on $M\times [0,T]$.
\end{prop}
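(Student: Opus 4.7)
The plan is to apply a parabolic maximum principle to preserve the Hermitian tensor inequality $T(t) := \omega(t) - e^{-\alpha t}\omega_h \geq 0$ with $\alpha = nK/c_1$, which is equivalent to the desired conclusion $g(t) \geq e^{-\alpha t}h$.  The initial condition $g(0) \geq h$ gives $T(0) \geq 0$, so it suffices to show $T$ remains positive semidefinite on $[0, T]$.

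To handle non-compactness, I would use the completeness and bounded curvature of $h$ together with the uniform equivalence $c_1 h \leq g(t) \leq c_2 h$ to justify an Omori--Yau type maximum principle on $(M, h)$.  This reduces the problem to a local contact argument at a point $(x_0, t_0)$ where $T(x_0, t_0)$ first develops a null eigenvector $V_0 \in T^{1,0}_{x_0}M$.  I would extend $V_0$ to a holomorphic vector field $V$ in a neighborhood of $x_0$ by $h$-parallel transport, so that $h(V, \bar V) \equiv 1$, and choose $h$-K\"ahler normal coordinates at $x_0$ that simultaneously diagonalize $g(x_0, t_0)$ and make $V = \partial_1$.  The scalar $f(x, t) := T(V, \bar V)(x, t)$ is then nonnegative on $M \times [0, t_0]$ with $f(x_0, t_0) = 0$, so $(x_0, t_0)$ is a local minimum of $f$.

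The key computation is $\partial_t f$ at $(x_0, t_0)$.  Using $\partial_t g_{i\bar j} = -R^g_{i\bar j}$ together with the K\"ahler identity $R^g_{i\bar j} = R^h_{i\bar j} - \partial_i \partial_{\bar j}\log(\det g/\det h)$ I get
\[
\partial_t f(x_0, t_0) = -R^h_{1\bar 1}(x_0) + \partial_1\partial_{\bar 1}\log(\det g/\det h)(x_0, t_0) + \alpha e^{-\alpha t_0}.
\]
The bisectional bound $|\Rm(h)| \leq K$ gives $R^h_{1\bar 1}(x_0) \leq nK$, and the null-eigenvector condition together with $g \geq c_1 h$ forces $e^{-\alpha t_0} = g(V, \bar V)(x_0, t_0) \geq c_1$, whence $\alpha e^{-\alpha t_0} \geq nK$.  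These two terms cancel, leaving the $\log$-determinant Hessian as the main remainder.  I would show this term is nonnegative at the first contact by combining the spatial minimum conditions $\partial_p f(x_0, t_0) = 0$ and $\Delta_g f(x_0, t_0) \geq 0$ with the K\"ahler curvature identity $\partial_p\partial_{\bar q} g_{1\bar 1} = -R^g_{1\bar 1 p\bar q} + g^{m\bar n}\partial_p g_{1\bar n}\partial_{\bar q} g_{m\bar 1}$, using the null-eigenvector structure of $T$ near $x_0$ to absorb the $|\partial g|^2$ cross-terms.  This yields $\partial_t f(x_0, t_0) \geq 0$, contradicting the first-contact inequality $\partial_t f(x_0, t_0) \leq 0$.

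The main obstacle is the last step: cleanly showing the Hessian term is favorable at the contact point, since the $|\partial g|^2$ remainders from the Chern Laplacian expansion do not vanish automatically from $\partial f(x_0, t_0) = 0$ alone.  I would circumvent the borderline equality case by replacing $\alpha$ with $\alpha + \epsilon$, running the argument to obtain strict inequalities, and letting $\epsilon \to 0$; a standard approximation handles the non-smoothness of the smallest eigenvalue.
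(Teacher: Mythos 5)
Your overall strategy (first-contact/null-eigenvector maximum principle, cancellation of curvature terms, and using $g\geq c_1h$ through the contact identity to match the rate $nK/c_1$) is the right one and matches the paper in spirit, but the proposal has a genuine gap at exactly the step you flagged, and the fix you suggest cannot close it. The problem is that you apply only $\partial_t$ to $f=T(V,\bar V)$ and then must show $\partial_1\partial_{\bar 1}\log(\det g/\det h)\geq 0$ at the contact point from the spatial minimum of $f$. The minimum gives $g^{p\bar q}\partial_p\partial_{\bar q}f\geq 0$, and by the K\"ahler symmetry $\partial_p\partial_{\bar q}g_{1\bar 1}=\partial_1\partial_{\bar 1}g_{p\bar q}$ this converts into $\partial_1\partial_{\bar 1}\log\det g = g^{p\bar q}\partial_p\partial_{\bar q}g_{1\bar 1}-g^{p\bar q}g^{m\bar n}\partial_1 g_{p\bar n}\partial_{\bar 1}g_{m\bar q}$, so the conversion leaves the quadratic term $-|\partial_1 g|^2_g$ with the unfavorable sign. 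The first-order condition $\partial f(x_0,t_0)=0$ (in $h$-normal coordinates) only kills the components $\partial_1 g_{p\bar 1}$, not all of $\partial_1 g_{p\bar n}$, and the null-vector condition $T(V,\bar W)=0$ gives zeroth-order information only; so this term is genuinely uncontrolled. Replacing $\alpha$ by $\alpha+\epsilon$ handles a borderline equality, not a wrong-signed quadratic remainder, so the $\epsilon$-shift does not rescue the argument. (Two smaller issues: a vector field cannot simultaneously be holomorphic, $h$-parallel and of constant $h$-norm, so your extension of $V_0$ is internally inconsistent; and invoking Omori--Yau gives only approximate contact points, which is awkward for a tensor inequality, whereas an explicit barrier is cleaner.)

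The paper's proof avoids this sign problem by applying the full heat operator $(\partial_t-\Delta_{g(t)})$ to the contracted tensor $\a_{X\bar X}$, with $X$ extended parallel with respect to $g(t_0)$ and noncompactness handled by the explicit barrier $\epsilon e^{At}\tilde\rho$ added to $g(t)$ (so a true interior contact point exists). With this arrangement the second-order term falls on the \emph{reference} metric: since $\nabla^{g(t_0)}g(t_0)=0$, the term $\Delta g_{X\bar X}$ drops out, and the Schwarz-lemma-type identity
\begin{equation*}
\Delta h_{X\bar X}=g^{p\bar q}h_{k\bar l}\Psi^k_{pX}\Psi^{\bar l}_{\bar q\bar X}-g^{p\bar q}R^h_{p\bar qX\bar X}+(\text{Ricci terms of }g)
\end{equation*}
produces the quadratic $\Psi$-term with the \emph{favorable} sign; the Ricci terms of $g$ cancel against $\partial_t g$ using $\a_{X\bar Y}=0$ for all $Y$, and the only curvature term left, $g^{p\bar q}R^h_{p\bar qX\bar X}$, is bounded by $\frac{nK}{c_1}h_{X\bar X}$ using $g\geq c_1h$, which is precisely why the rate $e^{-nKt/c_1}$ appears. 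To repair your proof you would need to reorganize the computation along these lines (i.e., trade $\partial_1\partial_{\bar 1}\log\det g$ for $\Delta_g h_{X\bar X}$), rather than perturb the decay rate.
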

\begin{proof}
Suppose the curvatures of $h$ are bounded by $K$ in absolute values.  In particular, there exits an exhaustion function  $\tilde\rho$ of $M$ with $\tilde \rho\geq 1$ where $|\partial \tilde\rho|+|\ddb \tilde\rho|$ bounded on $M$.  We will let $\rho:=e^{At} \tilde \rho$ where the constant $A$ is large enough such that 
$$\heat \rho \geq 1$$
on $M\times[0, T]$.  Let $\e>0$ be given and consider the family of tensors $$\a(t)= (1+\e \rho )g(t)-(e^{-cKt}) h $$ on $M$ where $c=n/c_1$. Clearly, $\a(0)>0$ and $\a(t)\rightarrow +\infty$ as $x\rightarrow \infty$. We will show that $\a(t)>0$ on $M\times[0, T]$.  The proposition will then follow by letting $\e \to 0$.

Suppose on the contrary that $\a$ is not positive on $M\times [0,T]$.  Due to the presence of $\rho$, there is $(x_0,t_0)\in M\times (0,T]$ and $X\in T^{1,0}_{x_0}M$ with $|X|_{g(t_0)}=1$ such that 
$$\a_{X\bar X}=0.$$
We may choose $t_0>0$ such that for all $t<t_0$, $\a>0$ on $M$. Extend $X$ by parallel transport using metric $g(t_0)$ so that at $(x_0,t_0)$, $\partial_t X=\nabla X=\Delta X=0$. At $(x_0,t_0)$, we have 
\begin{equation}\label{d1}
\begin{split}
0&\geq \heat \a_{X\bar X}\\
&=-(1+\e \rho)R_{X\bar X}+e^{-cKt} \Delta h_{X\bar X}+cKe^{-cKt} h_{X\bar X}+\e\Box \rho
\end{split}
\end{equation}

On the other hand, using the fact that $\a_{X\bar Y}=0$ for all $Y\in T^{1,0}M$, we have
\begin{align*}
\Delta h_{X\bar X}
&=g^{p\bar q}h_{k\bar l} \Psi^k_{pX}\Psi^{\bar l}_{\bar q\bar X}- g^{p\bar q}\tilde R_{p\bar qX\bar X}+\frac{1}{2}\left(R_{\bar j}^{\bar l} h_{X\bar l}+R^k_i h_{k\bar X} \right)\\
&=g^{p\bar q}h_{k\bar l} \Psi^k_{pX}\Psi^{\bar l}_{\bar q\bar X}- g^{p\bar q}\tilde R_{p\bar qX\bar X}+e^{cKt}(1+\e\rho)R_{X\bar X}.
\end{align*}
where we denote $\tilde R$ to be the curvature of $h$. Hence, we have 
\begin{align*}
\heat \a_{X\bar X}&\geq cKe^{-cKt} h_{X\bar X} - e^{-cKt} g^{p\bar q} \tilde R_{p\bar qX\bar X}+\e \\
&\geq e^{-cKt}(cK- \frac{n}{c_1} K)+\e\\
&>0.
\end{align*}
But this contradicts \eqref{d1}.

\end{proof}

\section{A global existence result for Chern Ricci flow}

 We recall here a global existence result for the Chern Ricci flow from \cite{LeeTam2017}. 
 A family of hermitian metrics $g(t)$ on $M$ is said to be a solution to the Chern Ricci flow if 
 \begin{align}\label{crf}
 \frac{\partial}{\partial t} g_{i\bar j}=-R^C_{i\bar j}
 \end{align}
 where $R^C_{i\bar j}(g(t))$ is the Chern-Ricci curvature of $g(t)$.    If $g(t)$ is a family of \K metrics, then \eqref{crf} and \eqref{krf} coincide since in this case we have the Chern-Ricci curvature $R^C_{i\bar j}$ of $g(t)$ is the same as the Ricci curvature $R_{i\bar j}$ of $g(t)$.  In a local holomorphic coordinate, the Chern Ricci curvature is given by 
 $$R_{i\bar j}^C=-\partial_i \partial_{\bar j} \log \det g(t)=g^{k\bar l}R^C_{i\bar j k\bar l}$$
 where $R^C$ denotes the curvature tensor with respect to the Chern connection.  Similar to the \KR flow,  if $g(t)$ is a Hermitian family solving \eqref{crf} then we may write the corresponding family of forms $\omega(t)$ as
$$\omega(t)=\omega_0-t\Ric^C(h)+\ddb \varphi$$ 
 where $h$ is any Hermitian background metric on $M$ and the evolving potential $\varphi$ is given by 
 $$\varphi(t)=\int^t_0 \log\frac{\det g(s)}{\det h}ds$$
In particular, the Chern-Ricci flow will preserve K\"ahlerity on open set $U$ provided $g_0$ is initially \K on $U$.  From now on we will simply use $\Ric$ to denote the Chern-Ricci curvature of a Hermitian metric $g$, which will coincide with the Levi Civita Ricci curvature at any point $g$ is \K.  We have the following fundamental existence theorem for the Chern-Ricci flow.

\begin{thm}\cite[Theorem 4.2]{LeeTam2017}\label{t-existence-2}
Let $(M^n,g_0)$ be a complete noncompact Hermitian manifold and let $T$ be the torsion of $g_0$.
 Suppose the Chern curvature of $g_0$, $|T|^2_{g_0}$ and $|\bar\p T|_{g_0}$ are uniformly bounded by $K>0$. Suppose also the Riemannian curvature of $g_0$ is bounded.  Then there is a constant $\a(n)>0$ depending only on $n$ such that the Chern-Ricci flow has a smooth solution $g(t)$ on $M\times[0,\a K^{-1}]$ such that $\a g_0\le g(t)\le \a^{-1} g_0$ on $M\times[0,\a K^{-1}]$.
\end{thm}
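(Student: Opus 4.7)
The plan is to adapt Shi's exhaustion strategy for the Ricci flow to the Chern-Ricci setting, working through the scalar parabolic complex Monge-Amp\`ere reformulation of \eqref{crf}. Writing a prospective solution as $\omega(t)=\omega_0-t\Ric(\omega_0)+\ddb\varphi$ with $\varphi(0)=0$, the flow reduces to
\[
\dot\varphi=\log\frac{(\omega_0-t\Ric(\omega_0)+\ddb\varphi)^n}{\omega_0^n},
\]
which is uniformly parabolic as long as $\omega_0-t\Ric(\omega_0)+\ddb\varphi>0$. Because the Chern-Ricci curvature of $g_0$ is bounded by $K$, the reference form $\omega_0-t\Ric(\omega_0)$ is already comparable to $\omega_0$ on $[0,cK^{-1}]$ for $c(n)$ small, so the degeneracy at $t=0$ is mild and the tensor flow \eqref{crf} becomes a genuinely parabolic scalar PDE for $\varphi$.

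First I would exhaust $M$ by relatively compact domains $\Omega_j\uparrow M$ with smooth strictly pseudoconvex boundaries and solve the Dirichlet problem $\varphi_j=0$ on $\partial\Omega_j$ for the scalar equation above on $\Omega_j\times[0,T_j]$; classical parabolic complex Monge-Amp\`ere theory supplies such short-time solutions, and the associated Hermitian metric $g_j(t)=\omega_0-t\Ric(\omega_0)+\ddb\varphi_j$ solves \eqref{crf} on $\Omega_j$. Next I would establish a priori estimates on $\varphi_j$ and $g_j(t)$ independent of $j$. The $C^0$ bound $|\varphi_j|\le C(n,K)t$ follows from the maximum principle applied to $\dot\varphi_j$. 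The central estimate is the uniform equivalence $\a g_0\le g_j(t)\le\a^{-1}g_0$, obtained by a parabolic Schwarz-lemma style argument: compute the evolution equations of $\log\tr_{g_j}g_0$ and $\log\tr_{g_0}g_j$ and apply the maximum principle on $\Omega_j$, with the Dirichlet data controlling the boundary behavior. In the non-K\"ahler setting these heat operators produce additional terms involving the torsion $T$ of $g_0$ and $\bar\p T$; using $|T|^2_{g_0},|\bar\p T|_{g_0}\le K$ together with the Chern and Riemannian curvature bounds on $g_0$, these error terms are dominated, yielding the equivalence on a common interval $[0,\a K^{-1}]$ with $\a=\a(n)$.

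With uniform $C^0$ and metric equivalence in hand, higher-order interior estimates follow from Shi-type arguments: using the evolution of $|\Psi_j|^2$ with $\Psi_j=\Gamma(g_j)-\Gamma(g_0)$ (cf.\ \eqref{genereal-equ}) and inductively the evolutions of $|\nabla^k \Rm(g_j)|^2$, each combined with cutoffs supported well inside $\Omega_j$, one obtains local $C^k$ bounds independent of $j$. A diagonal subsequence argument together with Arzel\`a-Ascoli then extracts a smooth limit $g(t)$ solving \eqref{crf} on $M\times[0,\a K^{-1}]$ satisfying $\a g_0\le g(t)\le \a^{-1}g_0$.

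The main obstacle is the uniform equivalence step. In the K\"ahler case the classical parabolic Schwarz lemma yields such an estimate cleanly from a Chern curvature bound alone, but here the presence of torsion on $(M,g_0)$ forces additional terms into the Bochner-type identities for $\tr_{g_0}g_j$ and $\tr_{g_j}g_0$; it is precisely to absorb these that $|T|^2_{g_0}$ and $|\bar\p T|_{g_0}$ must be assumed bounded. Controlling them delicately enough to retain an equivalence interval of length $\a K^{-1}$ with $\a$ depending only on $n$ (rather than on higher-order data of $g_0$) is the crux of the argument; once this is established, the higher-order estimates and the limit extraction proceed along standard Shi lines, and the assumption of bounded Riemannian curvature of $g_0$ enters primarily to guarantee the completeness of the exhaustion and the applicability of the maximum principle in the parabolic Schwarz step.
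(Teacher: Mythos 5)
First, note that the paper does not prove this statement at all: it is quoted verbatim from \cite{LeeTam2017} (Theorem 4.2 there) and used as a black box, so the relevant comparison is with Lee--Tam's argument rather than with anything in the present text.

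Measured against that, your outline has a genuine gap at its central step. You exhaust $M$ by domains $\Omega_j$ and solve the potential flow with Dirichlet data $\varphi_j=0$ on $\partial\Omega_j$, and then claim the uniform equivalence $\a g_0\le g_j(t)\le \a^{-1}g_0$ by applying the maximum principle to $\log\tr_{g_j}g_0$ and $\log\tr_{g_0}g_j$, ``with the Dirichlet data controlling the boundary behavior.'' It does not: the condition $\varphi_j=0$ on $\partial\Omega_j$ controls neither $\ddb\varphi_j$ nor the metric $g_j(t)=\omega_0-t\Ric(\omega_0)+\ddb\varphi_j$ on the lateral boundary, so the maximum of your test quantities may sit on the parabolic boundary where you have no estimate. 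To repair this by barriers you would need boundary second-order estimates for the parabolic complex Monge--Amp\`ere Dirichlet problem, and those constructions inevitably bring in the geometry of $\partial\Omega_j$ (pseudoconvexity radii, distance functions, etc.), which destroys the key feature that the equivalence constant $\a$ and the time interval $\a K^{-1}$ depend only on $n$ and are independent of $j$; without that independence the limit extraction fails.

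The actual proof avoids boundary-value problems altogether: on each exhaustion piece $U_R$ one conformally blows up the initial metric near $\partial U_R$, $\tilde g_0=e^{2F}g_0$ with $F$ built from the function $\mathfrak{F}$ of Lemma \ref{l-exhaustion-1} (exactly the device reused in Claim \ref{int-pf} of this paper), obtaining a \emph{complete} Hermitian metric of bounded geometry on $U_R$ whose torsion and Chern curvature bounds are still controlled by $K$ up to dimensional factors. The flow is then solved on each complete piece, and the metric equivalence and higher-order estimates are derived there by maximum-principle arguments valid on complete noncompact manifolds -- this is where the bounded Riemannian curvature hypothesis enters, through the existence of an exhaustion function with bounded gradient and complex Hessian as in \eqref{exhaution-est} (cf.\ \cite{Tam2010}), not merely ``completeness of the exhaustion.'' Since $\tilde g_0=g_0$ away from $\partial U_R$, the interior estimates are uniform in $R$ and the diagonal limit gives the theorem. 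Your torsion-absorption discussion in the Schwarz-lemma step and the Shi-type higher-order estimates are in the right spirit, but as written the scheme does not close because of the uncontrolled boundary terms described above.
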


\section{Proof of Main Theorem \ref{mainthm}}

We begin by proving a result on the existence of local solutions to \eqref{krf} satisfying certain estimates (Lemma \ref{mainclaim}).  For our purposes later on, it will be convenient to state the result in terms of the following constants.

\begin{defn}\label{defn1} 
Let $h$ be a complete \K metric satisfying  $|\nabla \Rm(h)|+|Rm(h)|^{3/2}\leq1$ on $M$ and let $g_0$ be a \K metric with $\lambda^{-1}h\leq g_0\leq \lambda h$.  Define the following numbers depending only on $n, \lambda$:
\begin{itemize}
\item Let $\Lambda(n, \lambda)=\max\{ \lambda,2 C_0\lambda^{c_1}\}$ where $C_0(n)$ and $c_1(n)$ be the constants obtained from Theorem \ref{local-Lip};
\item Let $C_1(n,\Lambda)$ be the constant from Lemma \ref{curv-esti} with $\Lambda$ from above;
\item Let $B(n,C_1+\a(n)^{-1})$ be the constant obtained from Lemma \ref{l-curv1} with $A_1=C_1+\a(n)^{-1}$;
\item Let $\mu(n,\lambda)=\sqrt{\left(1+\frac{\a(n)}{C_1+\b_n\Lambda^4b} \right)}-1$ where $\a(n)$ is from Lemma \ref{t-existence-2}, $C_1$ is from above, $b$ is from Lemma \ref{l-exhaustion-1} with $\kappa=0.1$ and $\b_n$ is a large dimensional constant to be specified in the proof;
\item Let $a(n,\lambda)=\max\left\{ 2B(1+\mu)^2,n\log\frac{\Lambda}{\a}\right\}$ ;
\item Let $\tilde T(n,a,\lambda)$ be the constant from Theorem \ref{local-Lip};
\item Let $\hat T(n,a,\Lambda)$ be the constant from Lemma \ref{curv-esti};
\item Let $\sigma(n,\lambda)=\max\{ 1, \tilde T^{-1/2},\hat T^{-1/2}\}$.
\end{itemize}
\end{defn}
By the bound on the curvature of $h$ and its gradient in Definition \ref{defn1}, by \cite{Tam2010} there is an exhaustion function $\rho$ on $M$ with 
\begin{align}
\label{exhaution-est}|\partial\rho|^2+|\ddb \rho|_h \leq 1.
\end{align}
 Let 
$$U_s=\{ x \in M: \;\rho(x) <s\} \subset M.$$
Our result on existence of local solutions can then be stated as
\begin{lma}\label{mainclaim}
 Under Definition \ref{defn1}, given any $R>>1$ there is a solution $g(t)$ to \KR flow \eqref{krf} defined on $U_{R-4\Lambda \mu (1+\mu)^{-1}}\times [0, \sigma^{-2} (1+\mu)^{-2}]$
with 
\begin{enumerate}
\item $|\Rm(g(t))|\leq at^{-1}$;
\item $|\varphi(t)|\leq at$.
\end{enumerate}
\end{lma}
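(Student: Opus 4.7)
My plan is to construct the solution by an iterative procedure, repeatedly invoking the Chern-Ricci flow existence result Theorem \ref{t-existence-2} to advance time in finitely many steps, while using the local estimates Theorem \ref{local-Lip}, Lemma \ref{curv-esti}, and Lemma \ref{l-curv1} to propagate the target bounds $|\Rm|\le a/t$ and $|\varphi|\le at$ on slightly shrinking subdomains. Since Chern-Ricci flow coincides with \KR flow on \K metrics and preserves \K-ity on any open set where the initial data is \K, this produces a genuine solution to \eqref{krf}.

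For the initial step, I would cut off $g_0$ outside $U_R$ by glueing it with a multiple of $h$ via the exhaustion function $\rho$ from \eqref{exhaution-est} (together with the cutoff of Lemma \ref{l-exhaustion-1}, where the constant $b$ enters), obtaining a complete Hermitian metric $\wt g_0$ on $M$ equivalent to $h$ with Chern curvature bounded by some $K_0 \sim \b_n \Lambda^4 b$. Theorem \ref{t-existence-2} then produces a Chern-Ricci flow on the interval $[0,\a/K_0]$ with $\a \wt g_0 \le g(t)\le \a^{-1}\wt g_0$, from which $|\dot\varphi| \le n\log(\Lambda/\a)\le a$ and hence $|\varphi|\le at$. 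Inductively, given a solution on $U_{R_k}\times[0,t_k]$ satisfying the target estimates, Theorem \ref{local-Lip} gives $g(t_k)\sim \Lambda h$ on a slightly smaller region, Lemma \ref{curv-esti} upgrades this to $|\Rm(g(t_k))|\le C_1/t_k$, and cutting off $g(t_k)$ against $h$ and re-invoking Theorem \ref{t-existence-2} extends the flow by time $\tau_k \ge \a t_k/(C_1+\b_n \Lambda^4 b) = ((1+\mu)^2-1) t_k$, so $t_{k+1}\ge (1+\mu)^2 t_k$. Finally, Lemma \ref{l-curv1} with $r = \sqrt{t_k}$ promotes the bound on $[t_k,t_{k+1}]$ to $|\Rm(g(t))|\le B/t_k \le 2B(1+\mu)^2/t \le a/t$, closing the induction.

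The principal difficulty is the bookkeeping of domain loss. At each iteration the region shrinks by an amount proportional to $\Lambda \sqrt{t_k}$ (via the shrinking-ball comparison applied with $g(t_k)\le \Lambda h$ together with the geodesic-distance controls used in the proof of Theorem \ref{local-Lip}), and since $t_k$ grows geometrically by factor $(1+\mu)^2$, the total loss telescopes into a geometric series whose sum is comparable to $4\Lambda\mu/(1+\mu)$. The precise forms of $\mu$, $a$, and the factor $(1+\mu)^{-2}$ in the final time $T = \sigma^{-2}(1+\mu)^{-2}$ are calibrated exactly so that: (i) the hypotheses $t\le \tilde T\wedge \hat T$ of Theorems \ref{local-Lip} and \ref{curv-esti} are satisfied throughout the iteration, (ii) the advancement $((1+\mu)^2-1)t_k$ exceeds a definite fraction of $t_k$ so that only finitely many steps are needed, and (iii) at every stage the curvature improvement supplied by Lemma \ref{l-curv1} reproduces the target $a/t$ bound without enlarging $a$.
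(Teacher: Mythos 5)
Your iteration scheme is essentially the paper's: an initial short-time solution on $U_R$, then repeated extensions via Theorem \ref{t-existence-2} after completing the metric at time $t_k$, with Theorem \ref{local-Lip} giving $\Lambda^{-1}h\le g(t)\le \Lambda h$ on a slightly smaller set, Lemma \ref{curv-esti} giving $|\Rm|\le C_1/t_k$, a time gain $\a t_k/(C_1+\b_n\Lambda^4 b)=((1+\mu)^2-1)t_k$, Lemma \ref{l-curv1} restoring $|\Rm|\le B/t_k\le a/t$, and the geometric-series bookkeeping of the domain loss at scale $\Lambda\sigma\sqrt{t_k}$. The paper merely packages the induction as a maximal-$k$/contradiction argument (its Case 2), which is cosmetic.

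However, your base step contains a genuine error: you claim the completed initial metric $\wt g_0$ has Chern curvature bounded by $K_0\sim \b_n\Lambda^4 b$ and hence a uniform first time interval $[0,\a/K_0]$. Since $\wt g_0$ must coincide with $g_0$ on $U_R$ (otherwise its flow does not solve \eqref{krf} with initial data $g_0$ there), its curvature on $U_R$ is exactly that of $g_0$, which is not controlled by $n,\lambda,b$ in any way; handling such $g_0$ is the entire point of the lemma, and if your claim were correct the iteration would be nearly superfluous. The constant of the form $(C_1+C'(n)\Lambda^4 b)/t_k$ appears in the paper only at the extension steps, where the metric being completed is $g(t_k)$, whose curvature is already bounded by $C_1/t_k$ through Lemma \ref{curv-esti}. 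The correct base step (Claim \ref{int-pf}) asserts only a short time $t_0$ that may depend on $R$: the paper completes $g_0$ near $\partial U_{R+1}$ by the conformal blow-up $\tilde g_0=e^{2F}g_0$, to which the bounded-geometry estimates of \cite{LeeTam2017} apply, rather than by a convex gluing with a multiple of $h$ (a gluing would further require control of $\nabla^h g_0$ and of the torsion and $\bar\p T$ terms created in the transition zone, which you do not supply and which are unavailable for a general $g_0$), and then obtains $|\Rm|\le a/t$ and $|\varphi|\le at$ simply by smoothness of the solution after shrinking $t_0$. Note also that Theorem \ref{t-existence-2} by itself provides no curvature bound along the flow, so even at the base step some such argument is needed; your proposal omits how the $a/t$ bound is obtained there. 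With the base step repaired in this way, your induction closes exactly as in the paper.
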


\begin{proof}
We begin with the following

\begin{claim}\label{int-pf}  There is a short-time solution to the \KR flow on $U_R\times [0,t_0]$ for some $t_0>0$ sufficiently small and possibly depending on $R$, so that 
\begin{enumerate}
\item $|\Rm(x,t)|\leq at^{-1}$;
\item $|\varphi(x,t)|\leq at$.
\end{enumerate}
\end{claim}
\begin{proof}[proof of Claim \ref{int-pf}]
Let $F(x)=\mathfrak{F}(\frac{\rho(x)}{R+1})$ with $0<\kappa<1/8$ such that $(1+R)(1-\kappa)\geq R$ where $\mathfrak{F}, \kappa$ are from Lemma \ref{l-exhaustion-1}.
Consider the complete Hermitian metric $\tilde g_0=e^{2F}g_0$ on $U_{R+1}$.   It follows from Lemma \ref{l-exhaustion-1}  that $\tilde g_0$ will satisfy the hypothesis of Theorem \ref{t-existence-2} (see \cite{LeeTam2017}) and so 
we obtain a solution $ \tilde g(t)$ to Chern Ricci flow \eqref{crf}  on $U_{R+1}\times[0, t_0]$ for some $t_0>0$ with initial condition  $\tilde g_0$. Moreover, the restriction $g(t)=\tilde g(t)|_{U_R}$ solves \KR flow \eqref{krf} with initial condition $g_0$ because $\tilde g_0=g_0$ on $U_R$. Let $\tilde h=e^{2F}h$ and define
\begin{align}
\varphi=\int^t_0\log \frac{\det \tilde g(s)}{\det \tilde h}ds.
\end{align}
Then we may write $\tilde \omega(t)=\tilde \omega_0-t\Ric(\tilde h)+\ddb \varphi$. 

By smoothness of the solution, the curvature estimate in the claim follows by shrinking $t_0$ if necessary to be sufficiently small.  Moreover by \eqref{initial-equi}, by further shrinking $t_0$ if necessary we may have
$$\frac{1}{2\lambda^n}\leq  \frac{\det \tilde g(t)}{\det \tilde h}\leq 2\lambda^n$$
on $U_R\times [0,t_0]$ which implies the second conclusion in the claim.
\end{proof}

Now we provide the construction of an auxilliary function used in our constructions of local solutions to \KR flow. 

For $\kappa\in (0,1)$, let $f:(-\infty,1)\to[0,\infty)$ be the function:
\be\label{e-exh-1}
 f(s)=\left\{
  \begin{array}{ll}
    0, & \hbox{$s\in(-\infty,1-\kappa]$;} \\
    -\displaystyle{\log \lf[1-\lf(\frac{ s-1+\kappa}{\kappa}\ri)^2\ri]}, & \hbox{$s\in (1-\kappa,1)$.}
  \end{array}
\right.
\ee
Let   $\varphi\ge0$ be a smooth function on $\R$ such that $\varphi(s)=0$ if $s\le 1-\kappa+\kappa^2 $, $\varphi(s)=1$ for $s\ge 1-\kappa+2 \kappa^2 $
\be\label{e-exh-2}
 \varphi(s)=\left\{
  \begin{array}{ll}
    0, & \hbox{$s\in(-\infty,1-\kappa+\kappa^2]$;} \\
    1, & \hbox{$s\in (1-\kappa+2\kappa^2,1)$.}
  \end{array}
\right.
\ee
such that $\displaystyle{\frac2{ \kappa^2}}\ge\varphi'\ge0$. Define
 $$\mathfrak{F}(s):=\int_0^s\varphi(\tau)f'(\tau)d\tau.$$
Now we have the following Lemma from \cite{LeeTam2017}.
\begin{lma}\cite{LeeTam2017}\label{l-exhaustion-1} Suppose $0<\kappa<\frac18$. Then the function $\mathfrak{F}\ge0$ defined above is smooth and satisfies the following:
\begin{enumerate}
  \item [(i)] $\mathfrak{F}(s)=0$ for $0\le s\le 1-\kappa$.
  \item [(ii)] $\mathfrak{F}'\ge0$ and $\sum_{k=1}^2\exp( -k\mathfrak{F})|\mathfrak{F}^{(k)}|\leq b(\kappa)$.
\end{enumerate}

\end{lma}
Now define $t_k$ and $R_k$ inductively as follows
\begin{enumerate}
\item[(a)] $t_0$ is from claim \ref{int-pf} and $R_0=R$.
\item[(b)] $t_{k}=t_{k-1} (1+\mu)^2=t_0(1+\mu)^{2k}$;
\item[(c)]  $R_k=R_{k-1}-4\Lambda \sigma \sqrt{t_{k-1}}=R-4\Lambda \sigma (1+\mu)\mu^{-1}\sqrt{t_k} \left[ 1-(1+\mu)^{-k}\right]$.
\end{enumerate}
Consider the following statement 
 
$P(k):$ There is a solution of the \KR flow $g(t)$ on $U_{R_k}\times [0,t_k]$ with $g(0)=g_0$ such that 
\begin{enumerate}
\item $|\Rm(g(t))|\leq at^{-1}$;
\item $|\varphi(t)|\leq at$
\end{enumerate}
on $U_{R_k}\times [0,t_k]$ with $t_k\leq \sigma^{-2}$.

Clearly we see that $P(0)$ is true, while $t_k\rightarrow +\infty$ and $R_k \rightarrow -\infty$ as $k\rightarrow \infty$.  Let $k$ be the largest integer such that $P(k)$ is true. Then at $k+1$, we have the following possibilities.\\

{\bf Case 1: $t_{k+1} =(1+\mu)^2t_k\geq \sigma^{-2}> t_k$.} Then
\begin{equation}
\begin{split}
R_{k}&=R_{k-1}-4\Lambda \sigma\sqrt{t_{k-1}}\\
&=R-4\Lambda \sigma  \sqrt{t_{k}}\cdot \sum_{i=1}^{k} \frac{1}{(1+\mu)^{i}}\\
&\geq R-\frac{4\Lambda \mu}{1+\mu}.
\end{split}
\end{equation}
and the Lemma holds in this case.

{\bf Case 2: $t_{k+1}<\sigma^{-2}$.} We will show that this is not possible. Let $g(t)$ be the solution to \eqref{krf} on $U_{R_k}\times [0,t_k]$ from the statement $P(k)$. Using $\lambda^{-1}h\leq g_0\leq \lambda h$, for $x\in U_{R_k-\Lambda \sigma \sqrt{t_k}}$ we have 
$$B_{g_0}(x,\sigma \sqrt{t_k})\subset U_{R_k}.$$
Let $r=\sigma \sqrt{t_k}<1$. By the choice of $\sigma$ from definition \ref{defn1}, we can apply Theorem \ref{local-Lip} to \KR flow $\tilde g(t)=r^{-2}g(r^2t)$ on $B_{\tilde g(0)}(x,1)\times [0,t_k r^{-2}]$ with reference metric $\tilde h=r^{-2}h$ to deduce that for all $(x,t)\in U_{R-\Lambda\sigma \sqrt{t_k}}\times [0,t_k]$, 
\begin{align}\label{new-equ}
\Lambda^{-1} h \leq g(t)\leq \Lambda h.
\end{align}
Here we use the fact that $r<1$ so that the rescaled reference metric $\tilde h$ satisfies the curvature assumptions made on $h$ in definition \ref{defn1}.  Thus for $x\in U_{R_k-2\Lambda \sigma\sqrt{t_k}}$, we apply Lemma \ref{curv-esti} to $\tilde g(t)=r^{-2}g(r^2t)$ on $B_{\tilde g(0)}(x,1)\times [0,t_kr^{-2}]$ with reference metric $\tilde h$ to show that for $(x,t)\in  U_{R-2\Lambda\sigma \sqrt{t_k}}\times [0,t_k]$, 
\begin{align}\label{imp-cur}
t|Rm(x,t)|+t^{3/2}|\nabla Rm|\leq C_1.
\end{align}

At $t=t_k$, we extend the \KR flow as follows. Fix $\kappa\in (0,\frac{1}{10})$, let 
$$F(x)=\mathfrak{F}\left( 1-\kappa+\frac{\kappa}{\Lambda \sigma \sqrt{t_k}}\left(\rho(x)-R_k+3\Lambda \sigma\sqrt{t_k}\right)\right)$$
and consider the conformal change of metric $\hat g_0=e^{2F}g(t_k)$ and $\hat h=e^{2F}h$. By \cite[Lemma 4.3]{LeeTam2017}, $\hat g_0$ and $\hat h$ are complete Hermitian metric with bounded geometry of $\infty$ order on $U_{R_k-2\Lambda \sigma\sqrt{t_k}}$ with $\hat g_0=g(t_k)$ on $U_{R_k-3\Lambda\sigma\sqrt{t_k}}$. Moreover, by \cite[Lemma B1]{LeeTam2017}, \eqref{exhaution-est}, \eqref{new-equ} and the fact that $r<1<\Lambda $, there is $C'(n)>0$ such that 
\begin{align}
|\Rm^C(\hat g_0)|+|\partial \hat\omega_0|^2+|\nabla_{\bar\partial}\partial \hat\omega_0|\leq \frac{C_1+C'(n)\Lambda^4 b}{t_k}=K.
\end{align}
We choose $\b(n)$ in definition \ref{defn1} to be $C'(n)$ here. By Theorem \ref{t-existence-2}, there is a solution to the \KR flow $\hat g(t)$ starting from $\hat g_0$ on $U_{R_k-2\Lambda \sigma\sqrt{t_k}}\times [0,\a(n)K^{-1}]$ with 
\begin{align}\label{new-equ-2}
\a\hat g_0\leq \hat g(t)\leq \a^{-1}\hat g_0.
\end{align}
Restrict $\hat g(t)$ to $U_{R_k-3\Lambda \sigma\sqrt{t_k}}$ provides solution $g(t)=\hat g(t-t_k)$ to \KR flow on $U_{R_k-3\Lambda \sigma\sqrt{t_k}}\times [0,t_{k+1}]$.

Moreover using \eqref{new-equ-2}, \eqref{imp-cur} we apply Lemma \ref{l-curv1} on $B_{g(t_k)}(x, \sqrt{t_k})$ where $x\in U_{R_k-4\Lambda \sigma\sqrt{t_k}}$ to deduce that for all $t\in [0,t_{k+1}]$, 
\begin{align}
|\Rm(x,t)|\leq \frac{B}{t_k}< \frac{a}{t}.
\end{align}
The last inequality is by our choice of $a$ in definition \ref{defn1} and properties from $P(k)$. On the other hand, using \eqref{new-equ} and \eqref{new-equ-2} for $t\in [t_k,t_{k+1}]$, 
\begin{equation}
\begin{split}
|\varphi(t)|\leq at_k + n(t-t_k) \log \frac{\Lambda}{\a}<at.
\end{split}
\end{equation}
We have thus shown that in this case $P(k+1)$ is true, which contradicts the maximality of $k$.

 Thus Case 2 above cannot occur, leaving only Case 1 in which case the Lemma holds.  Thuis completes the proof of the Lemma.

\end{proof}

We now prove Theorem \ref{mainthm} using Lemma \ref{mainclaim}.

\begin{proof}[Proof of Theorem \ref{mainthm}]
 
By the results in \cite{Shi1997} and by scaling, there is a \K metric $\tilde h$ having curvature and gradient of curvature  bounded on $M$ by $1$.  Also we may have $C^{-1}(n, c, K) \tilde h  \leq g_0 \leq C(n, c, K)  \tilde h$ on $M$ for some constant $C(n, c, K)$ where $c, K$ are from the hypothesis of Theorem \ref{mainthm}.  It follows by Lemma \ref{mainclaim} that there is a solution $g_k(t)$ to \KR flow \eqref{krf} defined on $B_{g_0}(k) \times [0, S(n,c, K)]$ for all $k$ sufficiently large satisfying the conditions in the Lemma.  Theorem \ref{local-Lip} and local Evans-Krylov theory \cite{Evan1982,Krylov1983} or \KR flow local estimates \cite{ShermanWeinkove2012}  then imply the existence of $\tilde T(n, c, K)$ such that some subsequence of $g_k(t)$ converges smoothly and locally uniformly to a solution $g(t)$ to \eqref{krf} on $M\times[0, S \wedge \tilde T]$ such that $g(t) \in S(c_1, c_2)$ for all $t$ and some $c_i$ and satisfying the conditions in the claim.  The existence of the constants $a, T, C_2, C_1$ in Theorem \ref{mainthm} follows from this, and  Proposition \ref{prop1}.  The fact that $g(t)$ extends as a bounded curvature solution to \eqref{krf} on $M\times[0, T_h)$ follows from condtion (1) in the Theorem and \cite[Theorem 2.2]{CLT2} (see also \cite[Theorem 1.3]{LeeTam2017}).

\end{proof}

\section{Proof of Theorem \ref{non-smooth} }  Theorem \ref{non-smooth} (1) will follow from the following slightly more general existence theorem.
\begin{thm}\label{non-smoothnew}
Let $g_0$ be a continuous Hermitian metric. Suppose there is a sequence of smooth \K metric such that 
\begin{enumerate}
\item  $h_{k,0} \in S(1,c_k,h)$ for some $c_k$;
\item for any $\Omega\subset \subset M$, $h_{k,0}\rightarrow g_0$ uniformly on $\Omega$;
\item for any $\Omega\subset \subset M$, the scalar curvature of $h_{k,0}$, $R_k\geq -C(\Omega)$ on $\Omega$. 
\end{enumerate}
Then there is $T(n,h)>0$ so that \eqref{krf} has a solution $g(t)$ on $M\times(0,T]$ with $g(t)\geq \frac{1}{2n}h$.
\end{thm}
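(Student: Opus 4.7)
The plan is to apply Theorem \ref{mainthm} to each approximating metric $h_{k,0}$ and extract a subsequential limit. Since $h_{k,0}\in S(1,c_k,h)$, Theorem \ref{mainthm} produces a complete bounded curvature solution $g_k(t)$ of \eqref{krf} on $M\times [0,T_h)$ satisfying $g_k(t)\ge e^{-C_1(n)Kt}h$ and $|\Rm(g_k(t))|\le a_k/t$. Choosing $T=T(n,h)$ small enough that $e^{-C_1 KT}\ge 1/(2n)$ already yields the desired lower bound $g_k(t)\ge h/(2n)$ on $M\times[0,T]$, uniformly in $k$.

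The main task is to produce a locally uniform (in $k$) upper bound $g_k(t)\le C(\Omega)h$ on each $\Omega\subset\subset M$, so that the local estimates of Section 2 apply and yield a smoothly convergent subsequence. Because the $c_k$ are not assumed uniformly bounded, the upper bound coming from Theorem \ref{mainthm} degenerates as $k\to\infty$. Instead, we exploit the scalar curvature hypothesis through the volume-form evolution $\partial_t\log\det g_k=-R(g_k)$. The crucial intermediate step is to propagate the local initial bound $R(h_{k,0})\ge -C(\Omega)$ to a local bound $R(g_k(\cdot,t))\ge -C'(\Omega)$ on a slightly smaller set $\Omega'\subset\Omega$ for $t\in[0,T]$, uniformly in $k$. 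I would prove this by a localized maximum principle applied to $\partial_t R\ge \Delta R+R^2/n$, using an evolving cutoff of the type employed in Theorem \ref{local-Lip} (combining Perelman's Lemma 8.3 of \cite{Perelman2002} with the shrinking ball lemma) to accommodate the non-compactness and the $|\Rm|\le a_k/t$ blow-up at $t=0$.

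Granted this, integration of $-R(g_k)$ in time gives $\log(\det g_k(t)/\det h_{k,0})\le C'T$ on $\Omega'\times[0,T]$. Since $h_{k,0}\to g_0$ uniformly on $\Omega'$ by hypothesis, we deduce $\det g_k(t)\le C''\det h$ on $\Omega'\times[0,T]$. Combined with the lower bound $g_k(t)\ge h/(2n)$, the elementary inequality $\tr_h g\le (\det g/\det h)(\tr_g h)^{n-1}$ from the end of Section 2 yields $g_k(t)\le C'''(\Omega')h$ on $\Omega'\times[0,T]$. With these uniform two-sided bounds in hand, Theorem \ref{local-Lip}, Lemma \ref{curv-esti}, and Shi's derivative estimates supply locally uniform $C^\infty$ estimates on $g_k(t)$ for $t$ bounded away from $0$; a diagonal argument then extracts a smoothly convergent subsequence on each $\Omega'\times[\tau,T]$ ($\tau>0$), producing a smooth solution $g(t)$ of \eqref{krf} on $M\times(0,T]$ satisfying $g(t)\ge h/(2n)$.

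The principal obstacle is the localization of the scalar curvature lower bound in the second step. The global Hamilton-type estimate $R(g_k(t))\ge -n/t$ that follows from $|\Rm(g_k(t))|\le a_k/t$ is insufficient on its own, since the integral $\int_0^t (1/s)\,ds$ diverges at $0$; one genuinely needs a localized barrier argument that exploits the finite initial datum $R(h_{k,0})\ge -C(\Omega)$ and survives the apparent blow-up of $|\Rm(g_k(t))|$ near $t=0$. This is the technical heart of the proof and the only nontrivial new input beyond the machinery of Sections 2--4.
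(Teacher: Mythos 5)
Your overall skeleton (run the flow from each $h_{k,0}$, get a $k$-uniform lower bound against $h$, use the local scalar curvature hypothesis through $\p_t\log\det g_k=-R$ to get a local upper bound, then Evans--Krylov and a diagonal limit) is the same as the paper's, but as written there are genuine gaps. The clearest one: you never show that the limit attains the initial data. For a non-smooth Hermitian $g_0$ the paper's notion of a solution to \eqref{krf} on $M\times(0,T]$ explicitly requires $g(t)\to g_0$ pointwise as $t\to 0$, and the second half of the paper's proof is devoted to proving $g(t)\to g_0$ in $C^0_{loc}$, via the stability Lemma \ref{lemgequivalenttoboundedcurvaturelocal} applied on coordinate balls with $\hat g=h_{k_0}$ for a fixed large $k_0$ and the locally uniform two-sided bounds on $h_k(t)$. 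Without this step your limit is merely some K\"ahler--Ricci flow on $(0,T]$ bounded below by $\frac{1}{2n}h$, with no link to $g_0$, so the theorem is not proved.

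There are also two uniformity-in-$k$ problems in your first two steps, which matter precisely because $c_k$ is unbounded. First, Theorem \ref{mainthm}(1) gives $g_k(t)\ge e^{-C_1Kt}h$ only for $t\in[0,T)$ with $T=T(n,c_k,K)$, so choosing $T(n,h)$ with $e^{-C_1KT}\ge \frac{1}{2n}$ does not yield the lower bound on a $k$-independent time interval; the paper instead obtains $h_k(t)\ge\frac{1}{2n}h$ on a uniform interval $[0,\e_nK^{-1}]$ by applying the maximum principle to the Schwarz-lemma inequality in \eqref{genereal-equ}, $\heat \tr_{g_k}h\le K(\tr_{g_k}h)^2$ with $\tr_{h_{k,0}}h\le n$ (from $h_{k,0}\ge h$), which uses only the curvature bound of $h$. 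Second, your proposed localization of the scalar curvature lower bound uses evolving cutoffs built from Perelman's lemma with curvature scale $a_k/t$; since $a_k=a(n,c_k,K)$ is unbounded in $k$, the correction $\sqrt{a_kt}$ and the admissible times degenerate, so the resulting bound is not uniform in $k$ on a fixed $[0,T]$. This step is repairable: once $g_k(t)\ge\frac{1}{2n}h$ is in hand one has $\tr_{g_k(t)}h\le 2n^2$, so a \emph{fixed} cutoff built from $h$ (e.g.\ from the exhaustion in \eqref{exhaution-est}) has $|\p\phi|^2_{g_k(t)}$ and $|\Delta_{g_k(t)}\phi|$ bounded uniformly in $k$ and $t$, and the maximum principle applied to $\phi^2(-R)$ together with $\heat R\ge \frac1n R^2$ localizes the lower bound with constants depending only on $\Omega$, $n$, $h$; alternatively, the paper simply invokes \cite[Lemma 3.3]{CLT1}, which is exactly the local upper bound on the volume form (hence, with the trace inequality, on the metric) coming from a local lower bound on the initial scalar curvature. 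With these repairs, plus the initial-data step above and Proposition \ref{prop1} for the sharper lower bound of the limit, your argument matches the paper's.
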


 We begin by recalling the following Lemma from \cite{CLT1} which basically says that if a local solution $h(t)$ to
\eqref{krf}  is a priori uniformly equivalent to a fixed metric $\hat{g}$ in
space time, and close to $\hat{g}$ at time $t=0$, then it remains close to
$\hat{g}$
in a uniform space time region.  

\begin{lma}\label{lemgequivalenttoboundedcurvaturelocal}
Let $h(t)$ be a smooth solution to \eqref{krf} on $B(1)\times [0,
T)$ with $h(0)=h_0$ where $B(1)$ is the unit Euclidean ball in $\C^n$. Let
$\hat g$ be a smooth \K metric on $B(1)$.   Suppose
\be\label{lemgequivalenttoboundedcurvaturelocale1}
  N^{-1}\hat{g}\leq h(t)\le N\hat g
\ee
on $B(1)\times [0,T)$ for some $N>0$, and that
\be\label{lemgequivalenttoboundedcurvaturelocale2}
 (1-d)\hat{g}\leq h_0\le (1+d)\hat g
\ee
on $B(1)$.  Then there exists a positive continuous function $a(t):[0, T)\to \R$
depending only on $\hat{g}, N, d$ and $n$ such that
\be\label{lemgequivalenttoboundedcurvaturelocale3}
 \frac{(1-d)(1-a(t))}{(1+d)}h_0\leq h\le (1+a(t))h_0
\ee
on $B(1/2) \times[0,T)$, where $\lim_{t\to 0} a(t)=n\sqrt{2d(1+d)}/(1-d)$.
\end{lma}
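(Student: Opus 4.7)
The plan has three main steps: derive parabolic inequalities for trace quantities, apply a cutoff maximum principle, and extract pointwise eigenvalue bounds that can be re-expressed relative to $h_0$.

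First I record the evolution equations. Under the K\"ahler--Ricci flow, the standard Shi--Bando calculation (in the spirit of \eqref{genereal-equ}, with $\hat g$ playing the role of the background reference metric) yields
\[\Box U\le \hat g^{i\bar j}\hat g^{p\bar q}R^{\hat g}_{i\bar j k\bar l}h^{k\bar l}\le C_0(\hat g,N),\qquad \Box V\le C_0(\hat g,N),\]
where $U=\tr_{\hat g}h(t)$, $V=\tr_{h(t)}\hat g$, and $\Box=\partial_t-\Delta_{h(t)}$. The nonnegative $|\nabla\Psi|^2$ terms are discarded, and all contractions of $\Rm(\hat g)$ with $h^{-1}$ are controlled by the spacetime hypothesis $N^{-1}\hat g\le h(t)\le N\hat g$. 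The initial trace values satisfy $U(\cdot,0)\le n(1+d)$ and $V(\cdot,0)\le n/(1-d)$ by \eqref{lemgequivalenttoboundedcurvaturelocale2}.

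Next I localize with a cutoff. Let $\eta\in C_c^\infty(B(1))$ satisfy $\eta\equiv 1$ on $B(1/2)$, with $|\nabla\eta|^2,|\ddb\eta|$ bounded in $\hat g$ (hence in $h(t)$ by the $N$-equivalence). Apply the parabolic maximum principle to
\[\Phi^+=\eta U-n(1+d)-C_1 t,\qquad \Phi^-=\eta V-\tfrac{n}{1-d}-C_1 t,\]
where $C_1=C_1(\hat g,N,d,n)$ is large enough to absorb the commutator $[\Box,\eta\cdot]$. Both $\Phi^\pm\le 0$ at $t=0$, and the cutoff confines any positive maximum to the interior, giving a contradiction. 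Restricting to $B(1/2)$ where $\eta\equiv 1$ yields
\[U(x,t)\le n(1+d)+C_1 t,\qquad V(x,t)\le \tfrac{n}{1-d}+C_1 t,\quad (x,t)\in B(1/2)\times[0,T).\]

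Finally I extract pointwise eigenvalue bounds. At each $(x,t)$, write $\lambda_1,\dots,\lambda_n>0$ for the eigenvalues of $\hat g^{-1}h(x,t)$, so $U=\sum\lambda_i$ and $V=\sum\lambda_i^{-1}$. Adding the trace bounds and recognising $\lambda+\lambda^{-1}-2=(\lambda-1)^2/\lambda\ge 0$ gives
\[\sum_{i=1}^n\frac{(\lambda_i-1)^2}{\lambda_i}\le\tfrac{nd(2-d)}{1-d}+2C_1 t.\]
A direct algebraic manipulation, combined with the a priori bound $\lambda_i\le N$, translates this into $|\lambda_i-(1-d)|\le(1-d)\,a(t)$ for a continuous $a(t)$ on $[0,T)$ with $\lim_{t\to 0}a(t)=n\sqrt{2d(1+d)}/(1-d)$. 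Combining with the initial hypothesis $(1-d)\hat g\le h_0\le(1+d)\hat g$ then gives the asserted sandwich
\[\tfrac{(1-d)(1-a(t))}{1+d}h_0\le h(t)\le(1+a(t))h_0\quad\text{on }B(1/2)\times[0,T).\]
The main obstacle is this last step, namely extracting the sharp limiting constant $n\sqrt{2d(1+d)}/(1-d)$: using only one of the two trace bounds produces a crude $O(d)$ spread, while the $O(\sqrt{d})$ scaling requires the simultaneous exploitation of both $\sum\lambda_i\le n(1+d)$ and $\sum\lambda_i^{-1}\le n/(1-d)$, with the geometric-mean discrepancy controlling the deviation from the isotropic configuration.
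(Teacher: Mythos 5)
This lemma is not proved in the paper at all: it is recalled verbatim from \cite{CLT1}, so there is no in-paper argument to compare against and your attempt has to stand on its own. Your overall strategy is the natural one and is sound in outline: control $U=\tr_{\hat g}h(t)$ and $V=\tr_{h(t)}\hat g$ by a localized maximum principle using only the geometry of $\hat g$ on $\mathrm{supp}\,\eta$ together with the a priori $N$-equivalence, then convert the resulting trace bounds into pointwise eigenvalue bounds for $\hat g^{-1}h(t)$ and transfer them to $h_0$ via \eqref{lemgequivalenttoboundedcurvaturelocale2}. Two small points in the first two steps are stated loosely but are repairable: the displayed reaction term for $U$ is not the actual evolution equation of $\tr_{\hat g}h$ (the correct inequality is the standard second-order estimate, which under $N^{-1}\hat g\le h(t)\le N\hat g$ still gives $(\partial_t-\Delta)U\le C_0(\hat g,N)$ on $\mathrm{supp}\,\eta$), and the cutoff cross term $-2\,\mathrm{Re}\langle\nabla\eta,\nabla U\rangle$ must be handled at the maximum point using $\nabla(\eta U)=0$ and a cutoff with $|\nabla\eta|^2/\eta$ bounded (e.g.\ $\eta=\zeta^2$).

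The genuine gap is the final step. From $\sum_i(\lambda_i-1)^2/\lambda_i\le nd(2-d)/(1-d)+2C_1t$ you cannot reach the stated limit $n\sqrt{2d(1+d)}/(1-d)$: bounding the loose factor $\lambda_i$ by $N$ produces an $N$-dependent limit, which the statement forbids, while bounding it by $\sum_j\lambda_j\le n(1+d)+C_1t$ gives at best $|\lambda_i-(1-d)|\le n\sqrt{d(1+d)(2-d)/(1-d)}+d$ as $t\to0$, and since $(2-d)/(1-d)>2$ for every $d\in(0,1)$ this strictly exceeds $n\sqrt{2d(1+d)}$; so the precise limiting constant, which is part of the statement, does not follow from your inequality. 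The repair is to complete the square about $1-d$ rather than about $1$, which is exactly the ``simultaneous use of both traces'' you were after: on $B(1/2)$,
\begin{equation*}
\sum_i\frac{\bigl(\lambda_i-(1-d)\bigr)^2}{\lambda_i}=U+(1-d)^2V-2n(1-d)\le 2nd+C_1t\bigl(1+(1-d)^2\bigr),
\end{equation*}
and then $\bigl(\lambda_i-(1-d)\bigr)^2\le\lambda_i\bigl[2nd+C_1t(1+(1-d)^2)\bigr]\le\bigl(n(1+d)+C_1t\bigr)\bigl[2nd+C_1t(1+(1-d)^2)\bigr]$, so that defining $(1-d)a(t)$ to be the square root of the right-hand side gives a continuous $a$ with $a(0^+)=n\sqrt{2d(1+d)}/(1-d)$ exactly; the sandwich \eqref{lemgequivalenttoboundedcurvaturelocale3} then follows from $\lambda_{\max}\le(1-d)(1+a(t))$, $\lambda_{\min}\ge(1-d)(1-a(t))$ and $(1-d)\hat g\le h_0\le(1+d)\hat g$. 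With this substitution your argument is complete; as written, the claimed constant is not obtained.
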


\begin{proof}[Proof of Theorem \ref{non-smoothnew}]
Let $g_0$ be as in Theorem \ref{non-smooth}.  Thus there is a sequence $\{h_k\}\subset S(1, c_k, h)$ for some sequence  $c_k$ such that $h_k \to g_0$ in $C^{0}_{loc}(M)$.   By Theorem \ref{mainthm}, there is a sequence of solutions $h_k(t)$ to \eqref{krf} on $M\times [0,T_{h})$ and stays uniformly equivalent to $h$ for $t\leq T(k)$. By applying maximum principle on equation \eqref{genereal-equ} of $\tr_{h_k(t)}h$, there is $\e_n>0$ such that for all $(x,t)\in M\times  [0,\e_n K^{-1}]$, 
$$h_k(t)\geq \frac{1}{2n}h.$$
By \cite[Lemma 3.3]{CLT1}, we also have local uniform upper bound for $h_k(t)$ with respect to $h$ on $[0,\e_nK^{-1}]$. In particular, by the Evans-Krylov theory \cite{Evan1982,Krylov1983} or \KR flow local estimates \cite{ShermanWeinkove2012},  we may conclude subequence convergence $h_{k_i}(t)\to g(t)$ in $C^{\infty}_{loc}(M\times(0,\e_n K^{-1}]$ where $g(t)$ solves \eqref{krf}. By applying Proposition \ref{prop1} on each $h_k(t)$, it is easy to see that
$$g(t)\geq e^{-C_1Kt}h.$$

 It remains to prove $g(t)\to g_0$ in $C^{0}_{loc}(M)$. Fix some holomorphic coordinate ball $B(1)$ on $M$ and let $\delta>0$ be given.  Then we may choose some constant $k_0$ sufficiently large and some constant $C'(n, c_k, K)>0$ so that on $B(1)$ we have the following 

\begin{equation}\label{eee1}
 \left\{
 \begin{array}{ll}
(1-\delta)g_0&\leq h_{k_0} \leq (1+\delta)g_0\\
(1-\delta)h_{k_0}&\leq h_k \leq (1+\delta)h_{k_0} \text{\,\,\, for all $k\geq k_0$}\\
C'^{-1} h_{k_0}&\leq h_k(t) \leq C' h_{k_0} \text{\,\,\, for all $k\geq k_0$ and $t\in [0, T)$}\\
  \end{array}
 \right.
\end{equation}  
for some constant $C'$ depends only on $n, c, K$.  Then by Lemma \ref{lemgequivalenttoboundedcurvaturelocal} and \eqref{eee1} there is a function $a(t):[0, T)\to \R$ depending only on $h_{k_0}, C'(n, c, K), \delta, n$ such that  

\begin{equation}\label{eeeee1}
\lim_{t\to 0} a(t)=n\sqrt{2\delta(1+\delta)}/(1-\delta)
\end{equation}
and on $B(1)\times[0, T)$ we have
\begin{equation}\label{eeee1}
\frac{(1-\delta)(1-a(t))}{(1+\delta)} (1-\delta)^2 g_0 \leq h_k(t)  \leq (1+a(t))(1+\delta)^2 g_0
\end{equation} 
for all $k\geq k_0$.

Using \eqref{eeeee1} and \eqref{eeee1}, we conclude that given any $\epsilon>0$ we may choose $k_0, t_0>0$ so that for all $k\geq k_0$ and $0<t<t_0$ we have $|h_k (t) -g_0|_{g_0} \leq \epsilon$ on $B(1)$.  It follows that $g(t)\to g_0$ in $C^{0}_{loc}(M)$ which completes the proof of Theorem \ref{non-smoothnew}.
\end{proof}

 Theorem \ref{non-smooth} (1) follows directly from Theorem \ref{non-smoothnew}.  We now prove  (2).  Let $g_0$ be as in Theorem \ref{non-smooth} (2).  Thus there is a sequence $\{h_k\}\subset S(1, c, h)$ such that $h_k \to g_0$ in $C^{0}_{loc}(M)$, and by Theorem \ref{mainthm}, there is a sequence of solutions $h_k(t)$ to \eqref{krf} on $M\times [0,T_{h})$ each satisyfing the conclusions in  Theorem \ref{mainthm}.  In particular, by the uniform estimates in Theorem \ref{mainthm}, there exists $T(n, c, K)$ such that given any $T<T(n, c, K)$ we may have $C^{-1}h\leq h_k(t)\leq Ch$ on $M\times [0,T)$ for some some $C$ independent of $k$.  Thus by the Evans-Krylov theory \cite{Evan1982,Krylov1983} or \KR flow local estimates \cite{ShermanWeinkove2012}, we may conclude subequence convergence of $h_{k_i}(t)\to g(t)$ in $C^{\infty}_{loc}(M\times(0,T(n, c, K))$ where $g(t)$ solves \eqref{krf}.  The fact that we may extend this solution to $M\times(0, T_h)$ follows from the same argument as in Theorem \ref{mainthm}.  The fact that $g(t)\to g_0$ as $t\to 0$ follows by  Lemma \ref{lemgequivalenttoboundedcurvaturelocal}  exactly as in the proof of part (1).

\section{Proof of Theorem \ref{Stability}}
\begin{proof}[Proof of Theorem \ref{Stability}]
Let $g_0\in S(c_1,c_2,h)$, by Theorem \ref{mainthm} and By \cite[Theorem 1.3]{LeeTam2017} there is a longtime solution $g(t)$ to \eqref{krf} on $M\times [0,+\infty)$ with $g(0)=g_0$ such that for any $[a,b]\subset [0,+\infty)$, $g(t)$ is uniformly equivalent to $h$. If $k<0$, the longtime convergence follows from \cite[Theorem 1.1]{HuangLeeTamTong2018} and the uniqueness of complete \KE metric. It suffices to discuss the case of $k=0$.

For simplicity, we work on the universal cover $\mathbb{C}^n$. According to the parabolic Schwarz Lemma \eqref{genereal-equ} with $h$ being the flat metric, we can apply maximum principle to deduce that for all $t>0$, $g(t)\geq nc_1 h$. Also, since $\heat \log {\det g}=0$ and $\sup_{M\times[0, T)}\log \det g(x,t)<\infty$ for any $0<T<\infty$, we can apply the maximum principle to show $\log \det g$ is uniformly bounded above on $\mathbb{C}^n\times [0,+\infty)$ and hence further infer that $\exists C>1$ such that for all $(x,t)\in \mathbb{C}^n\times [0,+\infty)$,
\begin{align}\label{long-equi}
C^{-1}h\leq g(t)\leq Ch.
\end{align}
By Evans-Krylov estimates \cite{Evan1982,Krylov1983} or \KR flow local estimates \cite{ShermanWeinkove2012}, $g(t_k)$ converges to some \K metric $g_\infty$ on $\mathbb{C}^n$ for some subsequence $t_k\rightarrow +\infty$ in $C^\infty_{loc}$.

On the other hand, for $L$ sufficeintly large the function $F=t|\Psi|_{g(t)}^2+L\tr_gh $ satisfies
\begin{equation}
\begin{split}
\heat F&\leq 0.
\end{split}
\end{equation}
on $\mathbb{C}^n\times [0,+\infty)$.  Since $\sup_{M\times[0, T)}F(x, t)<\infty$ for any $0<T<\infty$, we can apply the maximum principle to show that $F$ is uniformly bounded above and hence $|\partial g(t)|\leq C't^{-1/2}$ on $\mathbb{C}^n\times [0,+\infty)$ for some $C'>0$. In particular, $\partial g_\infty=0$ implying $\omega_\infty=\ddb f$ for some quadratic polynomial $f$ and hence $\omega_\infty$ is a flat metric.
\end{proof}

\section{Proof of Theroem \ref{instantaneous}}
\begin{proof}
The proof here is similar to that in \cite{HuangLeeTam2019} except that here we have a \K approximation of the initial metric. For the sake of completeness, we present the proof here. For $\e>0$, let $g_{\e,0}=g_0+\e h$ be a complete \K metric uniformly equivalent to $h$. By Theorem \ref{mainthm}, there is a solution to the \KR flow $g_\e(t)$ on $M\times[0, T_\e)$ starting from $g_{\e,0}$ such that $g_\e(t)$ is uniformly equivalent to $h$ for all $t$, and we assume $T_\e$ is the maximal such time.  Consider the corresponding potential flow $\varphi_\e(t)=\int^t_0 \log \frac{\det g_\e(r)}{\det h}\;dr$ where we can rewrite the \K form of $g_\e(t)$ to be $\omega_\e(t)=\omega_0+\e\omega_h-t\Ric(h)+\ddb \varphi_\e$. 

\begin{claim}\label{approx-com}
Then there exists  $C(n,s,K,\b,f)>0$ such that for all $\e>0$ and $t\in (0,s\wedge T_\e)$ we have
\begin{enumerate}
\item[(i)] $\varphi_\e\leq Ct$,
\item[(ii)]$\varphi_\e\geq nt\log t-Ct$,
\item[(iii)] $\dot\varphi_\e \leq C$,
\end{enumerate}

\end{claim}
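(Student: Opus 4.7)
The plan is to exploit hypothesis (2) by introducing the auxiliary reference form $\tilde\omega_\e(t):=\omega_0+\e\omega_h+t(-\Ric(\omega_h)+\ddb f)$ and the twisted potential $\psi_\e:=\varphi_\e-tf$, so that $\omega_\e=\tilde\omega_\e+\ddb\psi_\e$. Condition (2) yields the strict positivity $\tilde\omega_\e(t)\geq(\e+t\b/s)\omega_h$ for $t\in[0,s)$, while condition (1) together with $|\Ric(\omega_h)|_h\leq K$ yields an upper bound $\omega_0+\e\omega_h-t\Ric(\omega_h)\leq C_0\omega_h$ with $C_0=C_0(n,s,K)$. Theorem~\ref{mainthm} guarantees $g_\e(t)$ is (non-uniformly) equivalent to $h$ on $M\times[0,s\wedge T_\e)$ for each fixed $\e>0$, so $\varphi_\e$ and $\dot\varphi_\e$ are a priori bounded there; this legitimizes the parabolic maximum principle after adding a correction $\pm\delta\chi$ with $\chi$ an exhaustion function satisfying $|\partial\chi|_h+|\ddb\chi|_h\leq 1$ (available by \cite{Tam2010}), followed by $\delta\to 0$. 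For (i), apply the maximum principle to $Q_1:=\varphi_\e-At-\delta\chi$ with $A:=n\log(2C_0)+1$: at any interior maximum $\ddb\varphi_\e\leq\delta\omega_h$, hence $\omega_\e\leq(C_0+\delta)\omega_h$ and $\dot\varphi_\e\leq n\log(C_0+\delta)<A$, contradicting $\partial_t Q_1\geq 0$; letting $\delta\to 0$ gives $\varphi_\e\leq At$.

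For (ii), which is the crux and where hypothesis (2) is used directly, apply the minimum principle to $Q_2:=\psi_\e-nt\log t+Ct+\delta\chi$ with $\delta\in(0,\e]$, noting $Q_2(\cdot,0)=0$ since $t\log t\to 0$. If $Q_2<0$ somewhere then it attains a negative interior minimum at some $(x_0,t_0)$ with $t_0>0$; at that point $\ddb\psi_\e\geq-\delta\omega_h$, so
\begin{equation*}
\omega_\e\geq\tilde\omega_\e(t_0)-\delta\omega_h\geq(\e-\delta+t_0\b/s)\omega_h\geq(t_0\b/s)\omega_h,
\end{equation*}
which yields $\dot\varphi_\e\geq n\log t_0+n\log(\b/s)$. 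Therefore
\begin{equation*}
\partial_t Q_2\big|_{(x_0,t_0)}=\dot\varphi_\e-f-n\log t_0-n+C\geq n\log(\b/s)-|f|_\infty-n+C>0
\end{equation*}
once $C$ is chosen large in terms of $n,\b,s,|f|_\infty$, contradicting $\partial_t Q_2\leq 0$ at an interior minimum. Hence $Q_2\geq 0$, and letting $\delta\to 0$ gives $\varphi_\e\geq nt\log t+tf-Ct\geq nt\log t-C't$.

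For (iii), direct differentiation of $\omega_\e=\omega_0+\e\omega_h-t\Ric(\omega_h)+\ddb\varphi_\e$, together with $\ddb\dot\varphi_\e=\Ric(\omega_h)-\Ric(\omega_\e)$, yields
\begin{align*}
\heat\varphi_\e &=\dot\varphi_\e-n+\tr_{g_\e}(\omega_0+\e\omega_h)-t\tr_{g_\e}\Ric(\omega_h),\\
\heat\dot\varphi_\e &=-\tr_{g_\e}\Ric(\omega_h),
\end{align*}
hence $\heat(t\dot\varphi_\e-\varphi_\e-nt)=-\tr_{g_\e}(\omega_0+\e\omega_h)\leq 0$. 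Since this quantity vanishes at $t=0$, the maximum principle (again with the $\delta\chi$ correction) gives $t\dot\varphi_\e-\varphi_\e\leq nt$, and combining with (i) yields $\dot\varphi_\e\leq n+\varphi_\e/t\leq n+C$. The main obstacle is the required uniformity in $\e$: Theorem~\ref{mainthm} provides only $\e$-dependent equivalence between $g_\e(t)$ and $h$, whereas the three bounds must be independent of $\e$; this is achieved in (ii) precisely through the twist $\psi_\e=\varphi_\e-tf$, which transfers the positivity of hypothesis (2) into a uniform lower bound on $\omega_\e$ at any candidate minimum, producing the sharp $nt\log t$ singular rate as $t\to 0$.
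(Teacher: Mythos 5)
Your proposal is correct and follows essentially the same argument as the paper: maximum/minimum principles with an exhaustion-function correction $\delta\rho$, using $\omega_0\leq\omega_h$ and $|\Ric(h)|\leq CK$ for (i), the convex-combination consequence $\omega_0-t\Ric(h)+t\ddb f\geq (t/s)\b\,\omega_h$ of hypothesis (2) for the $nt\log t$ lower bound in (ii), and the evolution identity $\heat(t\dot\varphi_\e-\varphi_\e-nt)=-\tr_{g_\e}(\omega_{\e,0})\leq 0$ combined with (i) for (iii). Your substitution $\psi_\e=\varphi_\e-tf$ is only a cosmetic repackaging of the paper's test function $\varphi_\e+\delta\rho-tf-nt(\log t-1)+\tilde Ct$, so no substantive difference.
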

\begin{proof}[proof of claim \ref{approx-com}]
Since $h$ has bounded curvature and $g_\e(t)$ is uniformly equivalent to $h$, there is an exhaustion function $\rho>0$ on $M$ such that $|\ddb \rho|_h \leq C$. For any $\delta>0$, consider $\tilde\varphi=\varphi_\e-\delta \rho$ on $M\times [0,s\wedge T_\e-\delta]$.  Then at a point  $(x_0, t_0)\in  M\times [0,s\wedge T_\e-\delta]$ where the function $\tilde\varphi-\tilde Ct$ attains a maximum, we may calculate the following provided $t_0>0$ and $\tilde C$ is chosen sufficently large

\begin{equation}
\begin{split}
0&\leq \varphi_\e'-\tilde C\\
&\leq \log\left( \frac{\omega_0+\e\omega_h-t\Ric(h)+\delta\ddb \rho}{\omega_h^n}\right)-\tilde C\\
&<0.
\end{split}
\end{equation}
which is impossible. Hence $t_0=0$. 
Here we have used the $|\Rm(h)|\leq K$ and $\e,\delta<<1$. By letting $\delta\rightarrow 0$, we see that (i) holds.

For the lower bound, we consider $\hat \varphi=\varphi_\e+\delta \rho-tf-nt(\log t-1)+\tilde Ct$ on $[0,s\wedge T_\e-\delta]$ for some constant $\tilde C$. Then at a point  $(x_0, t_0)\in  M\times [0,s\wedge T_\e-\delta]$ where this function attains a minimum we calculate the following provided $t_0>0$ $\tilde C$ is sufficiently large, and $\delta$ is sufficiently small compared to $\e$:
\begin{equation}
\begin{split}
0&\geq \hat\varphi'\\
&=\varphi_\e' -f -n\log t+\tilde C\\
&\geq \log \frac{(\omega_0+\e\omega_h-t\Ric(h)+t\ddb f-\delta \ddb \rho )^n}{\omega_h^n}-f -n\log t+\tilde C\\
&\geq -n\log s+n\log \b-\sup_M |f|+\tilde C\\
&>0.
\end{split}
\end{equation}
which is impossible. Hence $t_0=0$. By letting $\delta\rightarrow 0$, we see that (ii) holds. 

 To derive the upper bound on $\dot\varphi_\e$ we use the fact $$(\frac{\partial}{\partial t}-\Delta_t ) \dot\varphi_\e= -\tr_{\omega(t)}\Ric(h)$$ 
and thus
 $$(\frac{\partial}{\partial t} -\Delta) (t\dot \varphi_\e -\varphi_\e -nt)= - \tr_{\omega} (\omega_{\e, 0})$$
and apply the maximum principle as above to show that the function $t\dot\varphi_\e-\varphi_\e-nt-\delta \rho$
attains its maximum at on $M\times [0,s\wedge T_\e-\delta]$ when $t=0$ provided $\delta$ is sufficiently small and the bound in (iii) follows by letting $\delta\to 0$ this and the bound in (i).

\end{proof}

\begin{claim}\label{approx-com2}
Then exists  $C(n,s,K,\b,f, t)>0$ such that for all $\e>0$ and $t\in (0,s\wedge T_\e)$ we have $$tr_{g_{\e}(t)} h\leq C$$
\end{claim}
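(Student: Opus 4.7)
The plan is to apply a parabolic Aubin--Yau style maximum principle to $\mu := \mathrm{tr}_{g_\epsilon(t)} h$. First, setting $\psi := \varphi_\epsilon - tf$ and combining hypothesis (2) of Theorem \ref{instantaneous} with the trivial bounds $\omega_0 \geq (t/s)\omega_0$ and $\epsilon\omega_h \geq 0$ for $t \in (0,s]$, one obtains
\begin{equation*}
\omega_0 + \epsilon\omega_h - t\Ric(h) + t\ddb f \geq (t\beta/s)\omega_h,
\end{equation*}
which upon using the flow equation $\omega_\epsilon(t) = \omega_{\epsilon,0} - t\Ric(h) + \ddb\varphi_\epsilon$ becomes $\omega_\epsilon(t) \geq (t\beta/s)\omega_h + \ddb\psi$. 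Tracing with respect to $g_\epsilon(t)$ yields the pointwise inequality
\begin{equation*}
\Delta\psi \leq n - (t\beta/s)\mu.
\end{equation*}

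Next I would consider the test function $F = t\log\mu - A\psi - \delta\rho$ on $M\times[0, s\wedge T_\epsilon)$, where $A = 4C_n K s/\beta$ with $C_n$ denoting the dimensional constant from the parabolic Schwarz lemma $(\partial_t - \Delta)\log\mu \leq C_n K\mu$ (valid as $h$ has bisectional curvature bounded by $K$), $\rho \geq 0$ is the exhaustion function used in Claim \ref{approx-com}, and $\delta > 0$ is a small parameter ensuring $F \to -\infty$ at spatial infinity. Since $\varphi_\epsilon(0) = 0$, we have $F(\cdot, 0) \leq 0$. Combining the above ingredients gives
\begin{equation*}
(\partial_t - \Delta)F \leq \log\mu + \bigl[tC_n K - A(t\beta/s) + O(\delta)\bigr]\mu - A\dot\varphi_\epsilon + C_1,
\end{equation*}
so that with our choice of $A$ the coefficient of $\mu$ equals $-3tC_n K + O(\delta)$.

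The main obstacle is that $-A\dot\varphi_\epsilon$ has no $\epsilon$-uniform upper bound, since $\det g_\epsilon/\det h$ can degenerate as $\epsilon \to 0$; in particular part (iii) of Claim \ref{approx-com}, which only upper bounds $\dot\varphi_\epsilon$, is insufficient here. I would resolve this via the elementary AM--GM inequality $\mu \geq n(\det h/\det g_\epsilon)^{1/n}$, giving $-\dot\varphi_\epsilon \leq n\log\mu - n\log n$, which yields
\begin{equation*}
(\partial_t - \Delta)F \leq (1 + An)\log\mu - 3tC_n K\mu + O(\delta)\mu + C_2.
\end{equation*}
Applying the maximum principle on $M\times[0, t_0]$ for any fixed $t_0 \in (0, s)$ and using $F(\cdot, 0) \leq 0$: at any interior spacetime maximum $(x^\ast, t^\ast)$, for $\delta$ small relative to $t^\ast$ the coefficient of $\mu$ is $\leq -tC_n K$, and since $\mu \mapsto (1+An)\log\mu - tC_n K\mu$ is bounded above on $\mu > 0$, we obtain $\mu(x^\ast, t^\ast) \leq C_3(n, s, K, \beta, t^\ast)$. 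Using the $L^\infty$ bound on $\psi$ provided by (i), (ii) of Claim \ref{approx-com} together with $f \in L^\infty$, we bound $F(x^\ast, t^\ast)$ uniformly in $\epsilon$; letting $\delta \to 0$ then gives $t\log\mu - A\psi \leq C_4$ on $M\times[0, t_0]$, whence $\mu(x, t) \leq C(n, s, K, \beta, f, t)$ as claimed.
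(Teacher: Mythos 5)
Your overall strategy is sound and in fact close in spirit to the paper's: both arguments run a maximum principle on $\log\tr_{g_\epsilon}h$ corrected by a potential-type term built from hypothesis (2) (the paper uses $v=(s-t)\dot\varphi_\epsilon+\varphi_\epsilon-sf+nt$, whose heat operator is $\geq\beta\tr_{g_\epsilon}h$, together with the weight $(ns+1)\log t$; you use $t\log\tr_{g_\epsilon}h-A(\varphi_\epsilon-tf)$ and extract the good term $-(t\beta/s)\tr_{g_\epsilon}h$ from the elliptic inequality $\Delta\psi\leq n-(t\beta/s)\tr_{g_\epsilon}h$), and both control the troublesome $-\dot\varphi_\epsilon$ term by the same AM--GM observation $\log\frac{\det h}{\det g_\epsilon}\leq n\log\tr_{g_\epsilon}h-n\log n$. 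Your correct remark that Claim \ref{approx-com}(iii) is useless here (one needs a lower, not upper, bound on $\dot\varphi_\epsilon$) matches what is implicit in the paper's chain of inequalities.

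There is, however, one step that does not work as written: the treatment of the localization term $-\delta\rho$. Its contribution to the evolution inequality is $\delta\Delta_{g_\epsilon(t)}\rho\leq C\delta\,\tr_{g_\epsilon}h$, with no factor of $t$, whereas your good coefficient is $-3t C_nK$, which degenerates as $t\to0$. You then say that ``for $\delta$ small relative to $t^\ast$'' the coefficient of $\mu$ stays negative; but $\delta$ must be fixed before the maximum point $(x^\ast,t^\ast)$ of $F$ is located, and $t^\ast$ depends on $\delta$, so this quantifier order is circular: nothing prevents the maximum from occurring at $t^\ast\lesssim\delta$, where the $C\delta\mu$ term overwhelms $-3t^\ast C_nK\mu$ and the argument yields no bound. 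The gap is repairable, and the paper's proof shows one way: perturb by $\delta e^{At}\rho$ with $A$ chosen (for each fixed $\epsilon$, using that $g_\epsilon(t)$ is uniformly equivalent to $h$ by construction) so that $(\partial_t-\Delta)(e^{At}\rho)\geq0$, so the perturbation contributes nothing harmful and disappears when $\delta\to0$. Alternatively, since you already invoke the fixed-$\epsilon$ equivalence $C(\epsilon)^{-1}h\leq g_\epsilon(t)\leq C(\epsilon)h$ to ensure the maximum of $F$ is attained, you may as well use it to bound $\delta\Delta\rho\leq C(\epsilon)\delta$ as a purely additive error, which is harmless as $\delta\to0$ with $\epsilon$ fixed. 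With either repair, your conclusion goes through, provided you also note (as your write-up leaves implicit) that the resulting bound $\mu(x^\ast,t^\ast)\lesssim (t^\ast)^{-1}\log(1/t^\ast)$ gives $t^\ast\log\mu(x^\ast,t^\ast)\leq C$ uniformly as $t^\ast\to0$, so that $F(x^\ast,t^\ast)$ is bounded independently of both $\epsilon$ and $t^\ast$.
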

\begin{proof}
  From the above we may calculate that the function $v=(s-t)\dot\varphi_\e+\varphi_\e-sf+nt$ satisfies
\begin{align}
\heat v=\tr_{g_\e} (\omega_0+\e\omega_h-s\Ric(h)+ s\ddb f) \geq \b\tr_{g_\e} h.
\end{align}
Then for sufficiently large $L>>1$ and using \eqref{genereal-equ} the function $F=\log \tr_{g_\e}h -Lv +(ns+1)\log t$ will satisfy
\begin{equation}
\begin{split}
\heat F&< \frac{ns+1}{t}-\tr_{g_\e}h.
\end{split}
\end{equation}
Now let $(x_0, t_0)$ be the point in $M\times [0,s\wedge T_\e-\delta]$ where the function  $F_\delta=F-\delta e^{At}\rho$ is maximal where $A$ is chosen large enough so that $\heat (e^{At}\rho)\geq 0$.  Now if $t_0=0$ then the claim follows from the estimates in Claim \ref{approx-com} and the hypothesis on $g_0$.  If $t_0>0$ then by the maximum principle we have  $0\leq \heat F_\delta (x_0, t_0) \leq \heat F(x_0, t_0)$ and thus  $t_0\tr_{g_\e}h(x_0, t_0) \leq m$ from the estimates in Claim \ref{approx-com}. We may assume $\tr_{g_\e}h(x_0, t_0) \geq 1$  as otherwise the claim trivially holds. Then  we have the following where the RHS is evaluated at t $(x_0, t_0)$
\begin{equation}
\begin{split}
F_\delta(x, t)&\leq  (ns+1)\log t_0+\log \tr_{g_\e}h +(s-t_0)\log \frac{\det g_\e}{\det h}+C_1(n,s,f,K)\\
&\leq (ns+1)\log t_0+\left(ns+1 \right)\log \tr_{g_\e}h+C_2\\
&\leq C_3.
\end{split}
\end{equation}
and the claim follows by letting  $\delta\rightarrow 0$.
\end{proof}

By the Claims above we conclude that for any $[a,b]\subset (0,s\wedge T_\e)$, there is $C_4(n,s,f,K,a,b)>1$ such that  
$$C_4^{-1}h\leq g_\e(t)\leq C_4h$$
and it follows from local estimate \cite{Evan1982,Krylov1983,ShermanWeinkove2012} and diagonal subsequence argument that we in fact have $T_{\e} \geq s$ for all $\e$ and further that we can let $\e\rightarrow 0$ to obtain a smooth solution $\varphi$ to \eqref{potential-flow-equ} on $M\times (0,s)$.  By Claim \ref{approx-com}, $\varphi(t)\rightarrow 0$ uniformly on $M$. The smoothness on $U=\{x: g_0>0\}$ follows from \cite[Theorem 1.2]{HuangLeeTam2019} since the argument there is purely local once we have the estimates from claim \ref{approx-com}. One can also argue using the pseudolocality of Ricci flow \cite{Perelman2002,HeLee2018}, see \cite[Remark 3.1]{HuangLeeTam2019}.

\end{proof}

\end{document}